\definecolor{aleacolor}{rgb}{0.16,0.59,0.78}
\renewcommand{\cite}{\citet}
\theoremstyle{plain}
\newtheorem{theorem}{Theorem}[section]                                          
\newtheorem{proposition}[theorem]{Proposition}                          
\newtheorem{lemma}[theorem]{Lemma}
\newtheorem{corollary}[theorem]{Corollary}
\theoremstyle{definition}
\newtheorem{definition}[theorem]{Definition}
\theoremstyle{remark}
\makeatletter \@addtoreset{equation}{section} \makeatother
\newcommand{\defeq}{\vcentcolon=}
\def\QED{\mbox{$\square$}}
\def\endproof{\hspace*{\fill}~\QED\par\endtrivlist\unskip}
\begin{document}
\renewcommand*\abstractname{Abstract}
\renewcommand{\figurename}{Figure}
\renewcommand{\tablename}{Table}
\renewcommand{\refname}{References}

\title{A New Information Theoretical Concept: \newline
Information-Weighted Heavy-tailed Distributions}

\author{H.M. de Oliveira and R.J.S. Cintra}

\address{Federal University of Pernambuco, UFPE,\newline
Statistics Department,\newline
CCEN-UFPE, Recife, Brazil.}

\email{\{hmo,cintra\}@de.ufpe.br}
\urladdr{\url{http://arxiv.org/a/deoliveira_h_1.html}}
\urladdr{\url{http://arxiv.org/a/cintra_r_1.html}}

\thanks{This paper is dedicated to Professor Fernando Menezes Campello de Souza (PhD, Cornell), in his forthcoming 70th birthday, who has been steadfast at creating an exceptional atmosphere of interest in Statistics, and whose philosophy had a decisive influence on authors' way of looking the world.}

\subjclass[2010]{60E05, 62B10, 62E15, 94A15.} 
\keywords{information theory, information-weighted probability distribution, conjugated probability density function, heavy-tailed distributions.}

\begin{abstract}

Given an arbitrary continuous probability density function, it is introduced a conjugated probability density, which is defined through the Shannon information associated with its cumulative distribution function. These new densities are computed from a number of standard distributions, including uniform, normal, exponential, Pareto, logistic, Kumaraswamy, Rayleigh, Cauchy, Weibull, and Maxwell-Boltzmann. The case of joint information-weighted probability distribution is assessed. An additive property is derived in the case of independent variables. One-sided and two-sided information-weighting are considered. The asymptotic behavior of the tail of the new distributions is examined. It is proved that all probability densities proposed here define heavy-tailed distributions. It is shown that the weighting of distributions regularly varying with extreme-value index $\alpha>0$ still results in a regular variation distribution with the same index. This approach can be particularly valuable in applications where the tails of the distribution play a major role. 
\end{abstract}

\maketitle

\section{Preliminaries}

Information theory is a subject of relevance in many areas, particularly on Statistic~\cite{MacKay},~\cite{Cover-Thomas}. Given an arbitrary random variable with a continuous probability density function (pdf), $f_X(x)$, we can compute the (Shannon) information amount associated with the event $(X \leq x)$, that is, $F_X(x) \defeq P(X \leq x)$ for each $x \in \mathbb{R}$. This is given by $-\log F_X(x)$.
\begin{definition}\label{def:left}
\textit{(cumulative information pdf) The information-weighted density $f_{IX}(x)$ is defined by}:
\begin{equation} \label{eq:lefttail}
f_{IX}(x) \defeq -f_X(x)\cdot \log F_X(x).
\end{equation}
\endproof
\end{definition}
Let us define an operator $\mathcal{I}\{\cdot\}$, which maps the a probability density $f_X(x)$ into another function ${f_{IX}(x)}=\mathcal{I}\{ f_X(x) \}$ according to Def. \ref{def:left}. This can be interpreted as a probability density pair $f_X(x) \leftrightarrow f_{IX}(x)$ and the new density is the former density, but \textit{weighted} by the information provided by its cumulative distribution. 
In the framework of distribution generalization theory, a mapping that takes a distribution in another 
allows the construction of several new distributions (e.g. \cite{leao}), which is particularly attractive due to the fact that the shape of the new distribution is quite flexible. 
For instance, the beta generalized normal distribution (\cite{cintra2014}) encompasses the beta normal, 
beta Laplace, normal, and Laplace distributions as sub-models. This article is in a scope somewhat similar, providing the generation of new probability distributions. However, noteworthy here is the construction of heavy-tailed distributions, even from distributions that do not hold this attribute.\\
The information-conjugated distribution is denoted by inserting an $I$ before the standard distribution, e.g. for a normal distribution, $X \sim N(0,1) \leftrightarrow IX ~\sim IN(0,1)$. 
(remark: the terms information-conjugated and information-weighted are used interchangeably throughout the paper.) This first property of a conjugated pdf is concerning its support:
\begin{corollary}
\textit{The support of $f_{IX}(\cdot)$ is contained in the support of $f_X(\cdot)$}, i.e. $Supp f_{IX} \subseteq Supp f_X ~~~\Box$.
\end{corollary}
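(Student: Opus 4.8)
The plan is to argue directly from the defining formula \eqref{eq:lefttail}, exploiting that the conjugated density is nothing but the original density multiplied by a nonnegative weight. First I would recall that, for a density, the support is the closure of the set $\{x : f_X(x) > 0\}$, and I would record the sign of the weighting factor. Since $F_X$ is a cumulative distribution function, $0 \le F_X(x) \le 1$ for every $x$, whence $\log F_X(x) \le 0$ and therefore $-\log F_X(x) \ge 0$. Consequently $f_{IX}(x) = -f_X(x)\log F_X(x) \ge 0$ everywhere, which simultaneously confirms that $f_{IX}$ is a legitimate nonnegative candidate for a density and tells us that the only way for $f_{IX}$ to vanish or not is inherited from the factor $f_X$.

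The core of the argument is then a pointwise vanishing statement: wherever $f_X(x) = 0$, the product $-f_X(x)\log F_X(x)$ is $0$, so $f_{IX}(x) = 0$ as well. Taking the contrapositive, $f_{IX}(x) \ne 0$ forces $f_X(x) \ne 0$; hence every point at which $f_{IX}$ is nonzero already lies in $\{x : f_X(x) > 0\}$, and passing to closures yields $\mathrm{Supp}\, f_{IX} \subseteq \mathrm{Supp}\, f_X$, as claimed.

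The one delicate point — and the step I expect to require the most care — is the behaviour to the left of the support, where $f_X(x) = 0$ and $F_X(x) = 0$ simultaneously, so that the formula produces the indeterminate form $0\cdot(-\log 0)$. I would resolve this exactly as in the definition of differential entropy, adopting the standard information-theoretic convention $0\cdot\log 0 \defeq 0$; with this convention the identity $f_{IX}(x) = 0$ holds throughout the region $\{x : f_X(x) = 0\}$ without exception, and the vanishing argument of the previous paragraph goes through unimpeded. Finally, I would note for completeness that at the lower edge of the support one may have $F_X(x) = 0$ while $f_X(x) > 0$, in which case $f_{IX}$ diverges rather than vanishes; this is entirely consistent with the asserted inclusion, since such a point still belongs to $\mathrm{Supp}\, f_X$, and it merely reflects that the information weight may blow up at the lower end of the support.
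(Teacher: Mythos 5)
Your argument is correct and coincides with the (essentially immediate) justification the paper intends: the corollary is stated with the proof left as obvious, namely that $f_{IX}(x)=-f_X(x)\log F_X(x)$ vanishes wherever $f_X(x)$ does, so the inclusion of supports follows by taking closures. Your additional care with the indeterminate form $0\cdot(-\log 0)$ at the left edge of the support, resolved by the convention $0\cdot\log 0=0$, is a point the paper glosses over but does not change the conclusion.
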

Indeed
\begin{equation} \label{eq:normalizacao}
E (- \log F_X(X) )=-\int_{-\infty }^{\infty } {f_X(x)\cdot \log F_X(x)\operatorname{d}\!x}.
\end{equation}
This expression recalls the original definition of Shannon for the differential entropy of a continuous distribution (see~\cite{Michalowicz}), which is defined by
\begin{equation}
H(X) \defeq -\int_{-\infty }^{\infty } {f_X(x)\cdot \log f_X(x)\operatorname{d}\!x}.
\end{equation}
One of the troubling questions of this setting is the possibility of negative values for $H(X)$. 
This is due to the fact that $f_X(x)$ is not upper bounded by the unit. Replacing now $f_X(\cdot)$ by $F_X(\cdot)$ 
in the argument of the logarithm was our initial motivation as an attempt to address this issue, bearing in mind that  $F_X(x) \leq 1 ~(\forall x)$. 
However, rather to redefine entropy, this always resulted in unitary integral, leading the proposal laid down in this paper. The differential Entropy also has an interesting link with the wavelet analysis~\citep{deO}. We show in the sequel that the integral Eqn~\ref{eq:normalizacao} is always the unity, whatever the original probability density. Thus, the operator $\mathcal{I}\{\cdot\}$ preserves probability densities and the calculation of the area under the curve is an isometry.\\
\begin{proposition}
\textit{$f_{IX}(x)$ is a valid probability density}.
\end{proposition}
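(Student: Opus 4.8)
The plan is to verify the two defining properties of a probability density: non-negativity and unit total mass. Non-negativity is immediate: since $F_X(x)=P(X\leq x)\in[0,1]$ for every $x$, the logarithm satisfies $\log F_X(x)\leq 0$, hence $-\log F_X(x)\geq 0$; combined with $f_X(x)\geq 0$ this yields $f_{IX}(x)\geq 0$ everywhere, so the hard content lies entirely in the normalization.

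The substantive step is to show that the integral in Eqn.~\ref{eq:normalizacao} equals unity for an \emph{arbitrary} continuous density. First I would perform the change of variables $u=F_X(x)$, so that $\operatorname{d}\!u=f_X(x)\operatorname{d}\!x$. Since $X$ is continuous, $F_X$ is continuous and non-decreasing with $F_X(-\infty)=0$ and $F_X(+\infty)=1$, so as $x$ ranges over $\R$ the image variable $u$ sweeps the interval $(0,1)$. This transforms the integral into
\begin{equation}
\int_{-\infty}^{\infty} -f_X(x)\log F_X(x)\operatorname{d}\!x = \int_0^1 -\log u \operatorname{d}\!u.
\end{equation}

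It remains to evaluate this last integral, which I would do by integration by parts using the antiderivative $\int -\log u \operatorname{d}\!u = u-u\log u$. Evaluating at the endpoints gives $1$ at $u=1$ and, crucially, $0$ at the lower limit, since $\lim_{u\to 0^+} u\log u = 0$. Hence the integral equals $1$, establishing that $\mathcal{I}\{\cdot\}$ preserves total probability mass and confirming the isometry claimed above.

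The main obstacle is the rigorous justification of the change of variables together with the improper behavior at the lower endpoint. Because $f_X$ need not be strictly positive, $F_X$ may fail to be strictly increasing; however, on any interval where $f_X$ vanishes the integrand $-f_X\log F_X$ is identically zero, so such flat portions contribute nothing and the substitution remains valid on the support of $f_X$. The singularity of $\log u$ as $u\to 0$ is integrable precisely because of the limit $\lim_{u\to 0^+} u\log u = 0$, which simultaneously guarantees the convergence of the original integral as $x\to-\infty$. A pleasant feature of this argument is that the resulting value $\int_0^1 -\log u \operatorname{d}\!u = 1$ is independent of $f_X$, which explains why the normalization is automatic for every continuous distribution rather than requiring a distribution-specific constant.
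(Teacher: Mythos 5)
Your proof is correct and takes essentially the same route as the paper: your substitution $u=F_X(x)$, reducing the normalization integral to $-\int_0^1\log u\operatorname{d}\!u=1$, is precisely the paper's Stieltjes-integral computation $-\int\log F_X(x)\operatorname{d}\!F_X(x)$ followed by integration by parts, with both arguments hinging on $\lim_{u\to 0^+}u\log u=0$ to dispose of the lower boundary term. Your explicit remarks on the non-injectivity of $F_X$ and the integrable singularity at $u=0$ are a minor added precision over the paper's presentation.
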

\begin{proof}
In order to proof that this is a normalized nonnegative function, we shall prove that:\\
\begin{equation} \nonumber
\begin{split}
& i) ~\forall (x) ~f_{IX}(x) \geq 0.\\
& ii) \int_{-\infty }^{\infty }f_{IX}(x)\operatorname{d}\!x=1.
\end{split}
\end{equation}
\\
We remark first that $-\log F_X(x) \geq 0$, so (i) follows. Then we take
\begin{equation}
I \defeq \int_{-\infty }^{\infty }f_{IX}(x)\operatorname{d}\!x=-\int_{-\infty }^{\infty }  f_X(x)\cdot \log F_X(x)\operatorname{d}\!x,
\end{equation}
which can be rewritten in terms of a Stieltjes integral~\cite{Protter}:
\begin{equation}
I=-\int_{-\infty }^{\infty } \log  F_X(x)dF_X(x).
\end{equation}
Note that $F_X(x)$ is the cumulative probability distribution (CDF) of $X$. By the property of pars integration, we derive:
\begin{equation}
\begin{split}
I=-[\log F_X(+\infty)].F_X(+\infty)+ [\log F_X(-\infty)].F_X(-\infty)+\\
\int_{-\infty }^{\infty } F_X(x)d\log F_X(x),
\end{split}
\end{equation}
so that
\begin{equation}
I=\int_{-\infty }^{\infty } F_X(x)d\log F_X(x)=\int_{-\infty }^{\infty } dF_X(x)=1.
\end{equation}
\end{proof}
It is also straightforward to derive (by simple integration) that the CDF associated with pdf of Def. \ref{def:left} is:
\begin{equation} \label{eq:CDF_IX}
F_{IX}(x)=F_{X}(x). \left [ 1-\log ~F_{X}(x) \right ].
\end{equation}
As expected, $F_{IX}(-\infty)=0$ (since $\lim_{y \to 0 }~y\cdot \log y=0$) and $F_{IX}(+\infty)=1$.\\
\textit{How to model probabilistic events described by long-tailed distributions?} There are relatively few distributions used in this setting (e.g. Cauchy, log-normal, Weilbull, Burr...), highlighting the Pareto distribution. A pleasent reading review of different classes of distributions with heavy tails can be found in~\citep{Werner}. We are concerned particularly with two classes:
\begin{itemize}
\item{\text{class D: subexponential distributions,}}
\item{\text{class C: regular variation with tail index $\alpha>0$.}}
\end{itemize}
We show in the sequel that this paper offers a profuson of new options, primarily concerning the class of subexponential distributions~\citep{Goldie}.

\section{Conjugated Information-Weighted Density Associated with Known Distributions}
Now we compute the conjugated information density associated with selected standard distributions $F_X(x)$ selected in Table \ref{table:tab1} (see~\cite{Walpole}). 
\begin{table}[!h]
\centering
\caption{A few standard continuous probability distributions: nomenclature, density and support.}
\label{table:tab1}
\begin{tabular}{ c c c c} 
\hline 
Ddistribution& $f_X(x)$ & $Supp$\\ \hline
$U(0,1)$&1&$0\leq x\leq 1$\\
$N(0,1)$&$\frac{1}{\sqrt{2\pi }}e^{-x^2/2}$& $-\infty\leq x < +\infty$\\
$E(1)$& $e^{-x}u(x)$ & $0 \leq x < +\infty$\\
$logistic(0,1)$& $\frac{e^{-x}}{\left ( 1+e^{-x} \right )^2}$&$-\infty\leq x < +\infty$\\
$Ray(1)$ & $xe^{-x^2/2}$ & $0\leq x < +\infty$\\  
$Par(\alpha,1)$&$\frac{\alpha}{x^{\alpha+1}}$& $x\geq 1~\alpha>0$\\
$Cau(1)$& $\frac{1}{\pi(1+x^2)}$& $-\infty\leq x < +\infty$\\
$Wei(1,k)$& $kx^{k-1}e^{-x^{k}}$ &$0 \leq x < +\infty~k>0$\\
$Maxw(1)$& $\sqrt{\frac{2}{\pi}}x^2e^{-x^{2}/2}$ & $0 \leq x < +\infty$\\
$Kum(a,b)$ & $abx^{a-1}\left ( 1-x^{a} \right )^{b-1}$ & $a \leq x\leq b$ \\
\\ \hline
\end{tabular}
\end{table}

Plots of the probability density of some new probability distributions $f_{IX}$ presented in Table \ref{table:tab2} are shown in Fig. ~\ref{fig:fig1}. 
The mean and variance of the examined distributions were numerically evaluated and the results are shown in Table \ref{table:tab3}. 
As the weight by information leads to a left skew of the distribution, it is expected that the new statistical mean is smaller than that of the original distribution. 
We promptly check that this happens in every case, as expected.\\
There is just one intersection point between the conjugated distribution pair, which is given by 
\begin{equation}
x^* = F_X^{-1} (e^{-1}).
\end{equation}
\begin{figure}
       \centering
       \begin{subfigure}{0.20\textwidth}
       \includegraphics[scale=0.3, height=3.5cm]{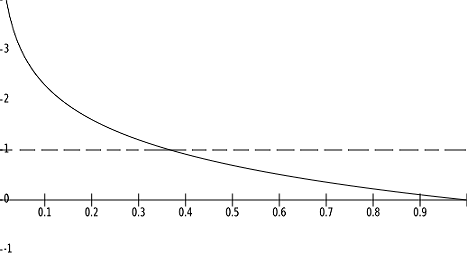}
       \captionof{figure}{IUniform}
       \end{subfigure}%
       \hfill
       \begin{subfigure}{0.2\textwidth}
       \includegraphics[scale=0.5, height=3.3cm]{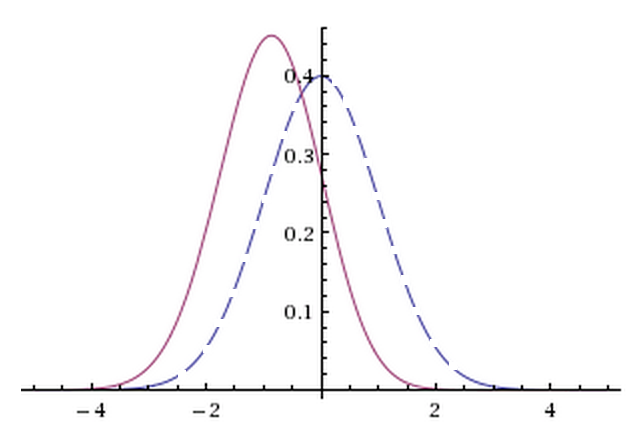}
       \captionof{figure}{Inormal}
       \end{subfigure}

       \begin{subfigure}{0.20\textwidth}
       \includegraphics[height=3.3cm]{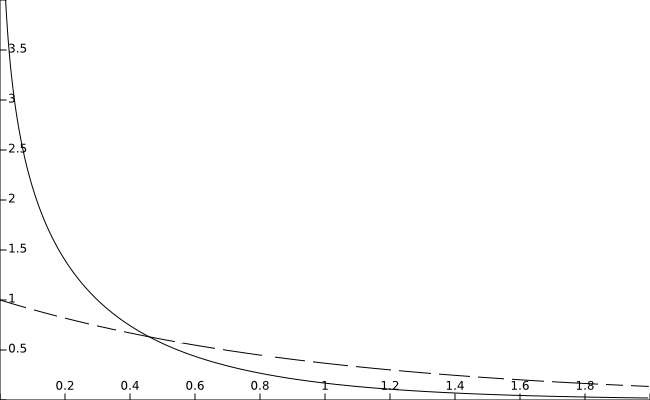}
       \captionof{figure}{IExponential}
       \end{subfigure}%
       \hfill
       \begin{subfigure}{0.2\textwidth}
       \includegraphics[height=3.3cm]{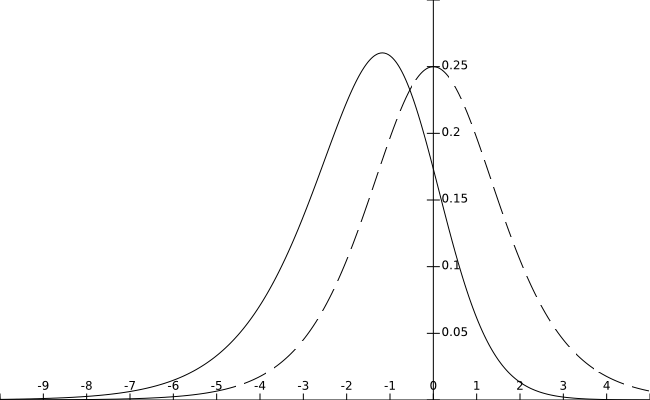}
       \captionof{figure}{Ilogistic}
       \end{subfigure}

       \begin{subfigure}{0.2\textwidth}
       \includegraphics[height=3.3cm]{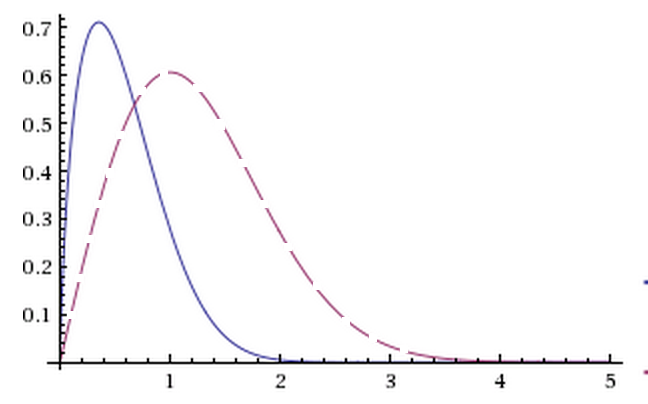}
       \captionof{figure}{IRayleigh}
       \end{subfigure}%
       \hfill
       \begin{subfigure}{0.20\textwidth}
       \includegraphics[height=3.3cm]{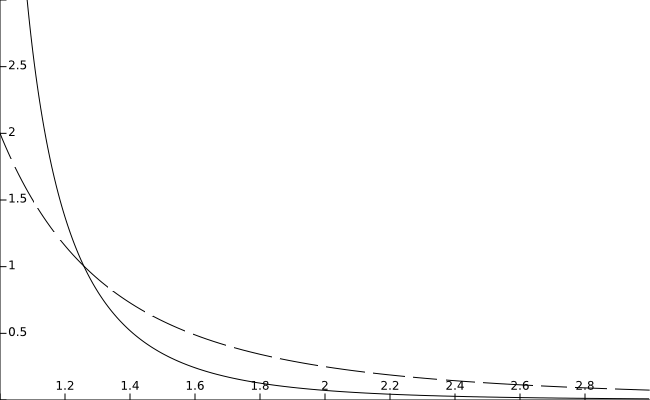}
       \captionof{figure}{IPareto}
       \end{subfigure}

       \begin{subfigure}{0.20\textwidth}
       \includegraphics[height=3.3cm]{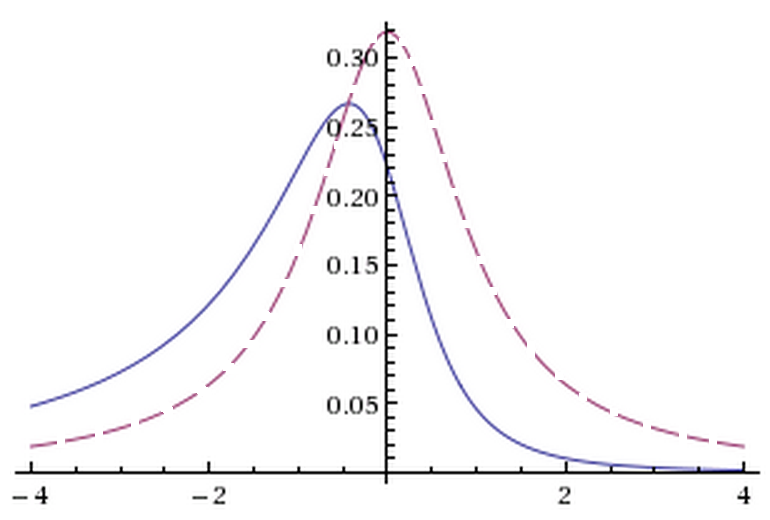}
       \captionof{figure}{ICauchy}
       \end{subfigure}%
        \hfill
       \begin{subfigure}{0.20\textwidth}
       \includegraphics[height=3.3cm]{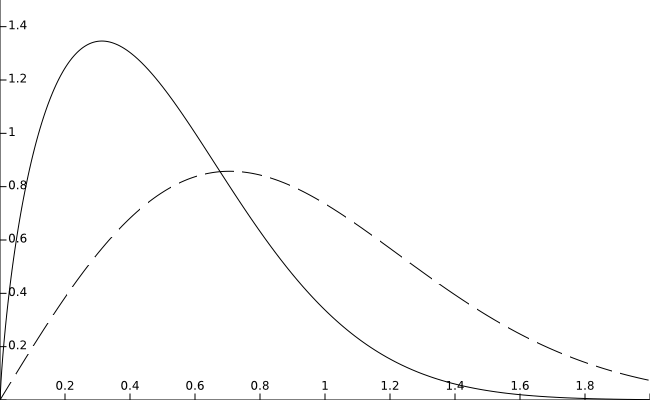}
       \captionof{figure}{IWeibull}
       \end{subfigure}

       \begin{subfigure}{0.2\textwidth}
       \includegraphics[height=3.3cm]{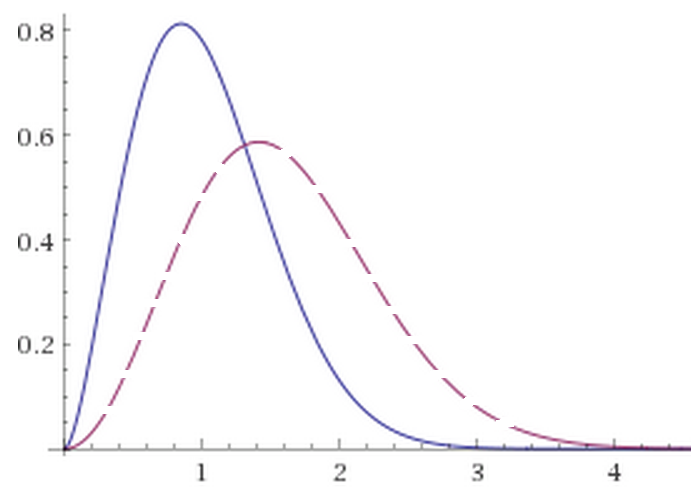}
       \captionof{figure}{IMaxwell}
       \end{subfigure}%
       \hfill
       \begin{subfigure}{0.20\textwidth}
       \includegraphics[height=3.3cm]{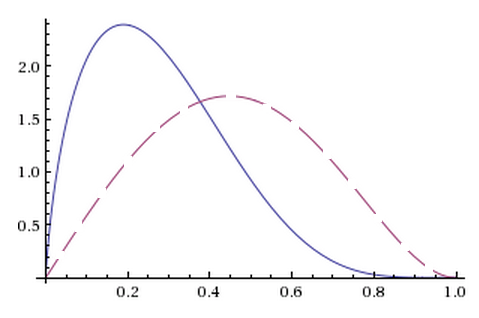}
       \captionof{figure}{IKumaraswamy}
       \end{subfigure}

\caption{Probability density function of information-weighted distribution associated with standard distributions. Plots in solid line: \textit{IUniform$(0,1)$, Inormal$(0,1)$, IExponential$(1)$, Ilogistic$(0,1)$, IRayleigh$(1)$, IPareto $(2,1)$, ICauchy, IWeibull $(1,3)$, IMaxwell-Boltzmann$(1)$ and IKumaraswamy$(2,3)$}. Dashed lines corresponds to original distributions.}
\label{fig:fig1}
\end{figure}
In view of Proposition 1, the improper integrals in Table \ref{table:tab4} are calculated (merely to indorse, all integrals were also computed directly from the Website~\cite{Wolfram}.) 
\\
As a measure of central tendency, the mode of the information-weighted distribution can be numerical estimated by determining the roots of the equation:
\begin{equation}
-F_X(x)\cdot \log F_X(x)-\frac{f_X^2(x)}{f'_X(x)}=0.
\end{equation}
Particularly, for the $IN$ distribution, these are roots of the equation:
\begin{equation}\label{eq:roots}
-\frac{e^{-x^2/2}}{\sqrt{2\pi}x}+\left [\frac{erf(\frac{x}{\sqrt{2}})+1}{2} \right ]. \log \left [\frac{erf(\frac{x}{\sqrt{2}})+1}{2} \right ]=0.
\end{equation}
A sketch for the curve defined by Eqn~\ref{eq:roots} is shown in Fig. ~\ref{fig:fig2}. The numerical value was -0.863778 ...
For assessing the asymmetry of the $IN(0,1)$ distribution, their quartiles have been calculated, resulting in $Q_1 \approx-1.4932$, $Q_2\approx-0.8901$, $Q_3\approx-0.2990$. 
Bowley's coefficient of skewness $b_1$ (or Galton's skewness) is defined by (see~\cite{Groeneveld}):
\begin{equation}
b_{1} \defeq \frac{(Q_3-Q_2)-(Q_2-Q_1)}{Q_3-Q_1},
\end{equation}
and thereby $b_1\approx -0.01<0$ (a slight left skew, as expected). Comparable behaviors are observed for other distributions.
In order to have a glimpse about the skewness of the new distribution, a box plot (Fig. ~\ref{fig:boxplot}) have been sketched for the conjugated pair distributed according to  $N(0,1) $ and $IN(0,1) $. Perhaps a beanplot (\cite{kampstra},~\cite{spitzer}) or a plot-balalaika~\cite{shovman} should be convenient when applied to data.
\begin{figure}
       \centering
       \includegraphics[height=7.8cm]{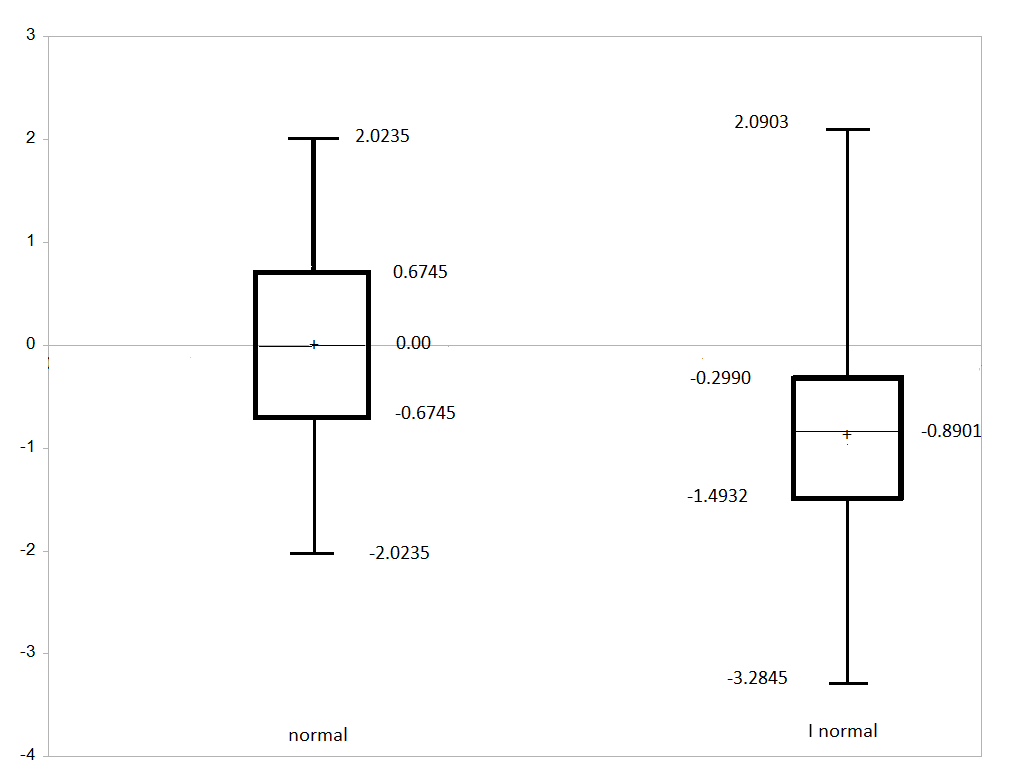}
       \captionof{figure}{Boxplot of data from distributions: normal $N(0,1)$ and Inormal $IN(0,1)$. }.
\label{fig:boxplot}
\end{figure}
\begin{figure}
       \centering
       \includegraphics[height=4.6cm]{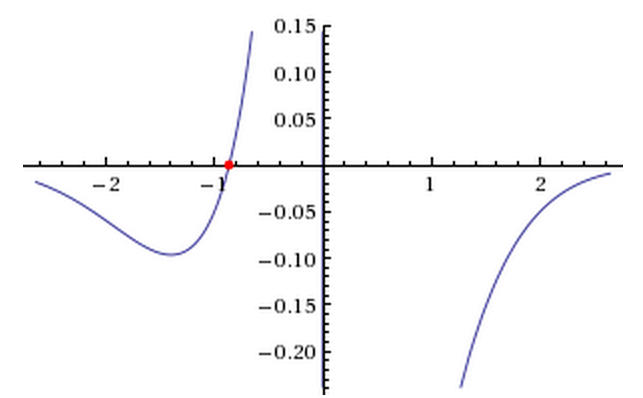}
       \captionof{figure}{Finding the mode of the normal information-weighted distribution (Eqn~\ref{eq:roots}): the root at -0.863778 ... corresponds to the mode of $IX \sim IN(0,1)$.}
\label{fig:fig2}
\end{figure}
\begin{table}[!h]
\centering
\caption{Information-weighted Distributions. Support is the same as in Table \ref{table:tab1}.}
\label{table:tab2}
\begin{tabular}{ c c } 
\hline 
Distribution & $f_{IX}(x)$ \\ \hline
$IU(0,1)$ & $-\log (x)$\\
$IN(0,1)$ & $-\frac{1}{\sqrt{2\pi }}e^{-x^2/2}\log \left ( \frac{erf(\frac{x}{\sqrt{2}})+1}{2} \right )$\\
$IE(1)$ & $-e^{-x} \log \left ( 1-e^{-x} \right )u(x)$ \\
$Ilogistic(0,1)$ & $\frac{e^{-x}\log \left ( \frac{e^{x}}{1+e^{x}} \right )}{\left ( 1+e^{-x} \right )^2}$\\
$IRay(1)$ & $-xe^{-x^2/2}\log \left ( 1-e^{-x^2/2}\right )u(x)$ \\  
$IPar(\alpha,1)$ &$-\frac{\alpha \log \left ( 1-x^{-\alpha} \right )}{x^{\alpha+1}}$\\
$ICau(1)$& $-\frac{1}{\pi(1+x^2)}\log \left ( \frac{tg^{-1}(x)}{\pi} +\frac{1}{2}\right )$ \\
$IWei(1,k)$& $-kx^{k-1}e^{-x^{k}}\log \left ( 1-e^{-x^{k}} \right )u(x)$\\
$IMaxw(1)$& $-\sqrt{\frac{2}{\pi}}x^2e^{-x^{2}/2}\log \left [ erf\left ( \frac{x}{\sqrt{2}}\right )-\sqrt{\frac{2}{\pi}} xe^{-x^{2}/2} \right ]u(x)$ \\
$IKum(a,b)$ & $-abx^{a-1}\left ( 1-x^{a} \right )^{b-1}\log \left [ 1-\left ( 1-x^{a} \right )^{b} \right ]$ \\
\\ \hline
\end{tabular}
\end{table}
The weighted distribution suffers a bias to the left. So we decided to inspect deeper into this issue. Examining the relationship between the statistical mean of the two distributions, it could be demonstrated that:
\begin{proposition}
\textit{Consider a pair $X \leftrightarrow IX$. Their means are related by:}
\begin{equation}\label{eq:mean_shift}
E\left ( IX \right )=E\left ( X \right )+\int_{-\infty }^{+\infty }F_X(x) \log F_X(x) \operatorname{d}\!x.
\end{equation}
The quantity
\begin{equation}
\Delta \mu \defeq -\int_{- \infty }^{+\infty } F_X (x) \log F_X(x) \operatorname{d}\!x
\end{equation} 
is referred to as ``mean shift to the left.''  
\end{proposition}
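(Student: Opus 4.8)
The plan is to compare the two means through their densities and then collapse the difference into a single integration by parts. First I would write the target expectation from its definition, inserting the information-weighted density of Def.~\ref{def:left}:
\begin{equation} \nonumber
E(IX)=\int_{-\infty}^{\infty} x\,f_{IX}(x)\operatorname{d}\!x = -\int_{-\infty}^{\infty} x\,f_X(x)\log F_X(x)\operatorname{d}\!x,
\end{equation}
and then subtract $E(X)=\int_{-\infty}^{\infty} x\,f_X(x)\operatorname{d}\!x$, so that the assertion reduces to the single identity
\begin{equation} \nonumber
E(IX)-E(X)=\int_{-\infty}^{\infty} x\,\bigl(f_{IX}(x)-f_X(x)\bigr)\operatorname{d}\!x = \int_{-\infty}^{\infty} F_X(x)\log F_X(x)\operatorname{d}\!x.
\end{equation}

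The second step would exploit the CDF of $IX$ already derived in Eqn~\ref{eq:CDF_IX}: since $F_{IX}(x)=F_X(x)\,[\,1-\log F_X(x)\,]$, the difference of the two cumulative functions is $D(x) \defeq F_{IX}(x)-F_X(x) = -F_X(x)\log F_X(x)$, and its derivative is exactly $f_{IX}(x)-f_X(x)$. Writing the difference of means as a Stieltjes integral $\int_{-\infty}^{\infty} x\operatorname{d}\!D(x)$ and integrating by parts produces $[\,x\,D(x)\,]_{-\infty}^{+\infty}-\int_{-\infty}^{\infty} D(x)\operatorname{d}\!x$. Substituting $D(x)=-F_X(x)\log F_X(x)$ in the surviving integral gives precisely $\int_{-\infty}^{\infty} F_X(x)\log F_X(x)\operatorname{d}\!x$, which closes the computation and, since $\log F_X\le 0$, also shows the shift is to the left as claimed.

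The main obstacle is the boundary term $[\,x\,D(x)\,]_{-\infty}^{+\infty}$. At $+\infty$ one has $F_X\to 1$, whence $\log F_X\sim F_X-1$ and $D(x)\sim 1-F_X(x)$ is the upper tail probability; at $-\infty$ one has $F_X\to 0$ and $D(x)=-F_X\log F_X\to 0$ by the same limit $\lim_{y\to 0} y\log y=0$ invoked just after Eqn~\ref{eq:CDF_IX}. To make $x\,D(x)$ vanish at both endpoints I would need the finiteness of $E(X)$ and $E(IX)$ (equivalently $x\,(1-F_X(x))\to 0$ and $x\,F_X(x)\to 0$), which is exactly the regime in which the statement is meaningful; the heavy-tailed cases lacking a finite mean are implicitly excluded. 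Once these integrability hypotheses are in force the boundary term drops out and the identity follows, the remaining manipulations being routine.
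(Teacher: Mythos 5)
Your proposal is correct and follows essentially the same route as the paper, which disposes of this proposition with the single line ``the proof follows from pars integration'': you carry out exactly that integration by parts, via the observation that $F_{IX}(x)-F_X(x)=-F_X(x)\log F_X(x)$, and you additionally supply the boundary analysis and integrability hypotheses that the paper leaves implicit. (One small imprecision: at $x\to-\infty$ the vanishing of $x\,D(x)=-x\,F_X(x)\log F_X(x)$ requires slightly more than $x\,F_X(x)\to 0$, namely the finiteness of $E(IX)$ itself, which you do invoke in your main clause.)
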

\begin{proof}
The proof follows from pars integration.\\
\end{proof}
Interestingly, it was discovered subsequently that this definition has a very deep relationship with the \textit{Cumulative Residual Entropy} (Eqn3, \cite{rao2004}). Since the second term of the right-hand side of Eqn~\ref{eq:mean_shift} is negative, then:
\begin{corollary}
\textit{The mean of the information-weighted distribution is shifted to the left with respect to the original mean, i.e.} 
$E\left ( IX \right ) \leq E\left ( X \right ) ~~~\Box$.
\end{corollary}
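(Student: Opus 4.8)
The plan is to read the result straight off Eqn~\ref{eq:mean_shift} of the preceding Proposition, which expresses the difference of the two means as
\begin{equation}
E(IX) - E(X) = \int_{-\infty}^{+\infty} F_X(x) \log F_X(x) \operatorname{d}\!x.
\end{equation}
It therefore suffices to establish that this integral is non-positive. The key observation is that $F_X(\cdot)$ is a cumulative distribution function, so $0 \leq F_X(x) \leq 1$ for every $x \in \mathbb{R}$; consequently $\log F_X(x) \leq 0$ while $F_X(x) \geq 0$, and the integrand $F_X(x)\log F_X(x)$ is pointwise non-positive (it is precisely the map $y \mapsto y \log y$ evaluated on $[0,1]$, which is $\leq 0$ there).

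Next I would invoke monotonicity of the integral: since the integrand is $\leq 0$ everywhere, its integral over $\mathbb{R}$ is $\leq 0$. Substituting back into the displayed identity gives $E(IX) - E(X) \leq 0$, i.e. $E(IX) \leq E(X)$, which is the claim. Equality forces $F_X(x) \log F_X(x) = 0$ almost everywhere, hence $F_X(x) \in \{0,1\}$ for almost every $x$, which happens only in the degenerate case where $X$ is essentially constant; for any genuinely continuous $X$ supported on an interval the shift is strict.

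The only real subtlety, and the point I would treat most carefully, is existence and finiteness of the quantities involved. The inequality is a genuine ordering only when $E(X)$ and $E(IX)$ both exist, which excludes heavy-tailed cases such as the Cauchy distribution whose mean is undefined; there the relation should be read as an identity between (possibly infinite or undefined) expressions rather than a true comparison. One should likewise confirm convergence of $\int F_X \log F_X \operatorname{d}\!x$: as $x \to +\infty$ the integrand tends to $0$ because $\lim_{y \to 1} y \log y = 0$, and as $x \to -\infty$ it tends to $0$ because $\lim_{y \to 0^{+}} y \log y = 0$, but integrability over the whole line needs the left tail $F_X$ and the right tail $1 - F_X$ to decay fast enough. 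For the standard distributions tabulated this is immediate, so once Eqn~\ref{eq:mean_shift} is granted the corollary reduces to a one-line sign argument.
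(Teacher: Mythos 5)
Your argument is correct and coincides with the paper's own reasoning: the corollary is deduced directly from Eqn~\ref{eq:mean_shift} by noting that $0 \leq F_X(x) \leq 1$ makes the integrand $F_X(x)\log F_X(x)$ non-positive, so the second term on the right-hand side is $\leq 0$. Your additional remarks on the equality case and on convergence of the integral go beyond what the paper records but do not change the route.
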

A novel way to evaluate the mean shift to the left or the cumulative residual entropy \cite{drissi_et_al} can be through a generalization of the approach of information generating function by \cite{Golomb}. Following a rather similar line of reasoning, it can be defined a continuous generator:
\begin{definition}\label{def:generating}
The continuous information generating function, $I(t)$, associated with a CDF $F_X(x)$ is
\begin{equation} \label{eq:generating} 
I(t) \defeq \int_{-\infty }^{+\infty } \{F_X(x)\}^tdx,~~ t \geq 1,
\end{equation}
provided that $\int_{-\infty }^{+\infty } F_X(x)\operatorname{d}\!x < \infty ~~~\Box$. 
\end{definition}
Writing the integrand of Eqn~\ref{eq:generating} in the form of $\{F_X(x)\}^t=exp\left [ t \cdot \log(F_X(x)) \right ]$, it can be derived that
\begin{equation}
-\frac{\partial I(t)}{\partial t} \Bigg|_{t=1} =-\int_{-\infty }^{+\infty}F_X(x) \cdot \log F_X(x)\operatorname{d}\!x.
\end{equation}
Changing the order of the derivative with the improper integral and changing the limit $t \to 1$ with the integral is ensured by applying the Theorems 10.38 and 10.39, pages 282-283 in \cite{Apostol}.\\
Now is quite easy to evaluate the median of the information-weighted distribution:
\begin{equation}
median \approx F_{X}^{-1}(0.186682).
\end{equation}
\centerline{(found with the aid of~\cite{Wolfram}, by typing \textit{roots\{x-x ln x-1/2\}}).}\\
To sum up, given a random variable $X$ with pdf $f_X(x)$ and CDF $F_X(x)$, a new ``conjugated'' ~random variable $IX$ is introduced is this paper, which has pdf $f_{IX}(x)$ (Def. \ref{def:left}) and CDF $F_{IX}(x)$ (Eqn~\ref{eq:CDF_IX}). Featured properties for these new probability distributions are analyzed in Section 5.

\begin{table}[!h]
\centering
\caption{Mean and variance (approximated values) for some information-weighted probability densities. Values for the original distribution also presented as a reference.}
\label{table:tab3}
\begin{tabular}{ c c c } 
\hline 
distribution & mean & variance \\ \hline
$IU(0,1)$ & 0.25 (1/4)& 0.048611 (7/144)\\
$U(0,1)$  & 0.5 (1/2)  & 0.083333 (1/12)\\ 
$IN(0,1)$ & -0.903197 & 0.779875\\
$N(0,1)$ & 0 & 1\\
$IE(1)$ & 0.355066 ($2-\pi^2/6$)& 0.179946\\
$E(1)$ & 1 & 1 \\
$Ilogistic$ & -1.644934 ($-\pi^2/6$)& 2.988185\\
$logistic$ & 0 & 3.289868 ($\pi^2/3$)\\
$IRay(1)$ & 0.716437 & 0.196849\\ 
$Ray(1)$  & 1.253314 ($\sqrt{\pi/2}$) & 0.4292034 ($\frac {4-\pi} {2}$)\\
$IPar(2,1)$ & 1.227411 & 0.138392\\
$Par(2,1)$ & 2 ($\frac {\alpha} {\alpha-1}$) & undefined\\ 
$ICau(1)$ & undefined & undefined\\
$Cau(1)$ & undefined & undefined\\
$IWei(1,2)$ & 0.506598 & 0.098424\\
$Wei(1,2)$  & 0.886227 ($\sqrt{\pi}/2$)& 0.214602\\
$IMaxw(1)$ & 1.02814 & 0.239568\\
$Maxw(1)$ & 1.595769 ($2\sqrt{(2/\pi)}$) & 0.453521 ($\frac {3\pi-8} {\pi}$)\\
$IKum(2,3)$ & 0.278825 & 0.025917\\
$Kum(2,3)$  & 0.457143 (16/35) & 0.25 (1/4)\\
\\ \hline
\end{tabular}
\end{table}

\begin{table}[!h]
\centering
\caption{(Improper) integral calculations derived from information-weighted distributions.} \label{table:tab4}
\begin{tabular}{ c } 
\hline 
$\int_{0 }^{1 }\log (x)\operatorname{d}\!x=-1$\\
$\int_{-\infty }^{+\infty } \frac{1}{\sqrt{2\pi }}e^{-x^2/2}\log \left ( \frac{erf(\frac{x}{\sqrt{2}})+1}{2} \right )\operatorname{d}\!x=-1$ \\
$\int_{0 }^{+\infty }e^{-x} \log \left ( 1-e^{-x} \right )\operatorname{d}\!x=-1$\\
$\int_{-\infty }^{+\infty }\frac{e^{-x}\log \left ( \frac{e^{x}}{1+e^{x}} \right )}{\left ( 1+e^{-x} \right )^2}\operatorname{d}\!x=-1$\\
$\int_{0 }^{+\infty } xe^{-x^2/2}\log \left ( 1-e^{-x^2/2}\right )\operatorname{d}\!x=-1$\\
$\int_{1 }^{+\infty }\frac{\alpha \log \left ( 1-x^{-\alpha} \right )}{x^{\alpha+1}}\operatorname{d}\!x=-1,~\alpha>0$\\
$\int_{-\infty }^{+\infty }\frac{1}{\pi(1+x^2)}\log \left ( \frac{tg^{-1}(x)}{\pi} +\frac{1}{2}\right )\operatorname{d}\!x=-1$\\
$k\int_{0 }^{+\infty }x^{k-1}e^{-x^{k}}\log \left ( 1-e^{-x^{k}} \right )\operatorname{d}\!x=-1,~k>0$\\
$\int_{0 }^{+\infty }\sqrt{\frac{2}{\pi}}x^2e^{-x^{2}/2}\log \left [ erf\left ( \frac{x}{\sqrt{2}}\right )-\sqrt{\frac{2}{\pi}} x^2e^{-x^{2}/2} \right ]\operatorname{d}\!x=-1$\\
$ab\int_{0 }^{1 }x^{a-1}\left ( 1-x^{a} \right )^{b-1}\log \left [ 1-\left ( 1-x^{a} \right )^{b} \right ]\operatorname{d}\!x=-1, ~a,b>0$ \\
\\ \hline
\end{tabular}
\end{table}

\section{Joint Information-Weighted Distribution}
It is straightforward to extend this approach of inducing distributions to the case of joint random variables~\cite{Montgomery}. 
\begin{definition}
\textit{(joint information-weighted pdf) The joint information-weighted density $f_{IX,IY}(x,y)$ of two random variables $X$ and $Y$ is defined by}:
\begin{equation} \label{eq:joint_pdf}
f_{IX,IY}(x,y) \defeq -\frac{1}{2}f_{X,Y}(x,y)\cdot \log F_{X,Y}(x,y). ~~~\Box
\end{equation}
\end{definition}	
The previous equation defines a joint probability density. First, we show:\\
\begin{lemma} \label{marginal}
The marginal probability densities associated with joint density $f_{IX,IY}$ are respectively:
\begin{subequations}
\begin{equation}\label{eq:eqA}
\int_{-\infty }^{\infty }f_{IX,IY}(x,y)dy=\frac{f_{IX}(x)}{2}+\frac{f_{X}(x)}{2},\\
\end{equation}
\begin{equation}\label{eq:eqB}
\int_{-\infty }^{\infty }f_{IX,IY}(x,y)\operatorname{d}\!x=\frac{f_{IY}(y)}{2}+\frac{f_{Y}(y)}{2}.\\
\end{equation}
\end{subequations}
\end{lemma}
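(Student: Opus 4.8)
The plan is to reduce \eqref{eq:eqA} to the single one-dimensional identity
\begin{equation}\nonumber
\int_{-\infty}^{\infty} f_{X,Y}(x,y)\log F_{X,Y}(x,y)\,dy = f_X(x)\log F_X(x) - f_X(x).
\end{equation}
Once this is in hand the lemma is immediate: multiplying by $-\tfrac{1}{2}$ reproduces the left-hand side of \eqref{eq:eqA}, and, recalling from Def.~\ref{def:left} that $f_{IX}(x)=-f_X(x)\log F_X(x)$, the right-hand side becomes $\tfrac{1}{2}f_{IX}(x)+\tfrac{1}{2}f_X(x)$. The second marginal \eqref{eq:eqB} then follows by interchanging the roles of $x$ and $y$, so the entire content of the lemma is the displayed identity.

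To establish it, I would exploit the product structure $F_{X,Y}(x,y)=F_X(x)F_Y(y)$ and $f_{X,Y}(x,y)=f_X(x)f_Y(y)$, which makes the logarithm split additively as $\log F_{X,Y}(x,y)=\log F_X(x)+\log F_Y(y)$. Substituting and factoring the $x$-dependent terms out of the $y$-integral leaves
\begin{equation}\nonumber
f_X(x)\log F_X(x)\int_{-\infty}^{\infty} f_Y(y)\,dy + f_X(x)\int_{-\infty}^{\infty} f_Y(y)\log F_Y(y)\,dy.
\end{equation}
The first integral equals $1$ by normalization of $f_Y$, while the second equals $-1$; the latter is exactly Proposition~1 rewritten as $\int_{-\infty}^{\infty} f_Y(y)\log F_Y(y)\,dy = -\int_{-\infty}^{\infty} f_{IY}(y)\,dy = -1$. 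Collecting the two contributions gives $f_X(x)\log F_X(x)-f_X(x)$, as needed.

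The step I expect to be the crux — and the place where I would have to be careful — is precisely this factorization, i.e. the (here implicit) independence of $X$ and $Y$. If one instead proves the identity directly by integrating by parts in $y$, writing $f_{X,Y}=\partial_y(\partial_x F_{X,Y})$, the boundary terms do correctly yield $f_X(x)\log F_X(x)$ (the $y\to-\infty$ endpoint vanishing because $\lim_{u\to0}u\log u=0$), but one is left with the cross term $\int_{-\infty}^{\infty}\partial_x F_{X,Y}\,\frac{\partial_y F_{X,Y}}{F_{X,Y}}\,dy$. Under independence this integrand collapses to $f_X(x)f_Y(y)$ and integrates to $f_X(x)$, closing the argument; for genuinely dependent $X,Y$ it does not reduce to $f_X(x)$, so the clean identity — and indeed the normalization of $f_{IX,IY}$, for which the factor $\tfrac{1}{2}$ is calibrated to balance the two symmetric $-1$ contributions of Proposition~1 — genuinely relies on independence. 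I would therefore carry out the proof under the independence hypothesis.
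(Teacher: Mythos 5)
Your proof is correct, and it is genuinely different from the one in the paper. The paper fixes $y$, writes $f_{X,Y}=\frac{\partial}{\partial x}\frac{\partial}{\partial y}F_{X,Y}$, and integrates by parts in $x$: the boundary term produces $f_Y(y)\log F_Y(y)$, and the remaining integral $\int \partial_y F_{X,Y}\,\partial_x F_{X,Y}/F_{X,Y}\,\operatorname{d}\!x$ is asserted to equal $\frac{\partial}{\partial y}\int dF_{X,Y}(x,\cdot)=f_Y(y)$, with no appeal to the normalization of $f_{IY}$ and no independence hypothesis. You instead split $\log F_{X,Y}=\log F_X+\log F_Y$ and reduce everything to the one-dimensional identity $\int f_Y\log F_Y=-\int f_{IY}=-1$; this buys a two-line computation at the cost of explicitly assuming $F_{X,Y}=F_XF_Y$. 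The key point is that your closing caveat is not mere caution --- it exposes a real gap in the paper's own argument, because the paper's unjustified step is \emph{equivalent} to that assumption:
\begin{equation*}
\frac{\partial_xF_{X,Y}\cdot\partial_yF_{X,Y}}{F_{X,Y}}=\partial_x\partial_yF_{X,Y}
\quad\Longleftrightarrow\quad
\partial_x\partial_y\log F_{X,Y}=0,
\end{equation*}
i.e.\ $F_{X,Y}$ is multiplicatively separable. For a genuinely dependent pair (e.g.\ the FGM copula $F_{X,Y}(x,y)=xy\bigl(1+\theta(1-x)(1-y)\bigr)$ with uniform marginals) the marginal of $f_{IX,IY}$ differs from $\tfrac12 f_{IY}+\tfrac12 f_Y$ already at first order in $\theta$, and $f_{IX,IY}$ then fails to integrate to one, so the lemma as stated --- and the subsequent corollary asserting that $f_{IX,IY}$ is a normalized density --- is false without independence. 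Your proof, carried out under the independence hypothesis you identify, is the correct version of the statement.
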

\begin{proof}
Let
\begin{equation}
I_1 \defeq \int_{-\infty }^{\infty }f_{X,Y}(x,y)\cdot \log F_{X,Y}(x,y)\operatorname{d}\!x,
\end{equation}
which can be rewritten as
\begin{equation}
I_1 = \int_{-\infty }^{\infty }\log  F_{X,Y}(x,y). \frac{\partial}{\partial x} \frac {\partial F_{X,Y}(x,y)}{\partial y}~\operatorname{d}\!x,
\end{equation}
Using pars integral in the definition, it follows that
\begin{equation}
I_1=\frac{\partial }{\partial y} F_{X,Y}(x,y).\left [ \log ~F_{X,Y}(x,y) \right ] \biggr |_{-\infty }^{\infty }-\frac{\partial }{\partial y}\int_{-\infty }^{+\infty }dF_{X,Y}(x,.)
\end{equation}
that is,
\begin{equation}
I_1=\frac{\partial }{\partial y} F_{X,Y}(+\infty,y).\left [ \log ~F_{X,Y}(+\infty,y) \right ] -\frac{\partial }{\partial y}F_{X,Y}(+\infty,y)
\end{equation}
or finally
\begin{equation}
I_1=f_{Y}(y).\left [ \log ~F_{Y}(y) \right ] -f_{Y}(y),
\end{equation}
and Eqn~\ref{eq:eqB} follows. The proof of Eqn~\ref{eq:eqA} is similar. 
\end{proof}

\begin{corollary}
\textit{The marginal probability densities shown in Eqn~\ref{eq:eqA}, Eqn~\ref{eq:eqB} comply:
\begin{subequations}
\begin{equation}
\frac{1}{2}f_{IA}(\alpha)+\frac{1}{2}f_{A}(\alpha)\geq 0,
\end{equation}
\begin{equation}
\int_{-\infty }^{+\infty }\frac{1}{2}f_{IA}(\alpha)+\frac{1}{2}f_{A}(\alpha)d\alpha=1,
\end{equation}
\end{subequations}
where either $\{A=X~and~\alpha=x\}$ or $\{A=Y~and~\alpha=y\}. ~~~\Box$} 
\end{corollary}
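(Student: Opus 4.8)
The plan is to recognize each marginal density as an equally-weighted mixture of two bona fide probability densities and then invoke the elementary fact that a convex combination of valid densities is again a valid density. Both non-negativity assertions reduce to the non-negativity of the two constituent factors, while both normalization assertions reduce to linearity of the integral together with the unit-mass property already established for information-weighted densities. By the symmetry of Eqn~\ref{eq:eqA} and Eqn~\ref{eq:eqB} it suffices to treat one marginal, say $A = X$ with $\alpha = x$; the case $A = Y$ is identical.

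For the first part, I would recall that $f_X(x) \geq 0$ holds by definition of a probability density, and that $f_{IX}(x) \geq 0$ was verified in step (i) of the proof that $f_{IX}$ is a valid density --- the crucial point being $-\log F_X(x) \geq 0$, which holds because $F_X(x) \leq 1$ for every $x$. Since the two coefficients $\tfrac{1}{2}$ are positive, the combination $\tfrac{1}{2}f_{IX}(x) + \tfrac{1}{2}f_X(x)$ is a sum of non-negative terms and is therefore non-negative.

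For the second part, I would split the integral by linearity:
\begin{equation} \nonumber
\int_{-\infty}^{+\infty} \left[ \tfrac{1}{2}f_{IX}(x) + \tfrac{1}{2}f_X(x) \right] \operatorname{d}\!x = \tfrac{1}{2}\int_{-\infty}^{+\infty} f_{IX}(x)\operatorname{d}\!x + \tfrac{1}{2}\int_{-\infty}^{+\infty} f_X(x)\operatorname{d}\!x.
\end{equation}
The second integral equals $1$ because $f_X$ is a density, and the first equals $1$ by the unit-mass conclusion of the earlier Proposition asserting that $f_{IX}$ integrates to one, giving $\tfrac{1}{2} + \tfrac{1}{2} = 1$. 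Here I would lean on Lemma~\ref{marginal}, which guarantees that the $X$-marginal of $f_{IX,IY}$ is exactly $\tfrac{1}{2}f_{IX} + \tfrac{1}{2}f_X$ with $f_{IX}$ built from the genuine marginal distribution function $F_X$, so that the one-dimensional Proposition applies verbatim.

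I expect no genuine obstacle in this argument: the substantive content is entirely carried by the Proposition on the validity of $f_{IX}$ and by Lemma~\ref{marginal}. The only point deserving care is the bookkeeping --- confirming that the reduction in Lemma~\ref{marginal} indeed yields $f_{IX}$ assembled from the correct marginal CDF $F_X$ (and symmetrically $F_Y$), so that the unit-mass and non-negativity facts transfer without any additional computation.
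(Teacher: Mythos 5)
Your argument is correct and is exactly the intended one: the paper states this corollary without a written proof precisely because it follows immediately from Lemma~\ref{marginal} together with the earlier Proposition that $f_{IX}$ is a valid density, i.e.\ the marginal is an equal-weight mixture of two nonnegative, unit-mass functions. Nothing is missing.
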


\begin{corollary}
\textit{The Eqn~\ref{eq:joint_pdf} defines a joint density, i.e. it is non-negative and normalized}
\begin{equation}\label{eq:eqC}
f_{IX,IY}(x,y)>0 ~and~\int_{-\infty }^{\infty }\int_{-\infty }^{\infty }f_{IX,IY}(x,y)\operatorname{d}\!xdy=1.
~~~\Box
\end{equation}
\end{corollary}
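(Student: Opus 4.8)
The plan is to verify the two asserted properties separately, handling non-negativity directly from the definition and deducing normalization from the marginal computation already carried out in Lemma~\ref{marginal}. For the sign condition I would observe that $f_{X,Y}(x,y)\ge 0$ since it is a joint density, while $F_{X,Y}(x,y)\in[0,1]$ since it is a joint cumulative distribution function; hence $\log F_{X,Y}(x,y)\le 0$ and $-\log F_{X,Y}(x,y)\ge 0$. The right-hand side of Eqn~\ref{eq:joint_pdf}, being a non-negative constant times a product of non-negative factors, is therefore non-negative everywhere (the strict inequality claimed in the statement holds wherever $f_{X,Y}>0$ and $F_{X,Y}<1$).

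For normalization I would exploit the fact that the integrand is non-negative, so that by Tonelli's theorem the double integral may be computed as an iterated integral in either order with no extra integrability hypothesis. Integrating first over $y$ and invoking Eqn~\ref{eq:eqA} of Lemma~\ref{marginal} gives
\begin{equation}\nonumber
\int_{-\infty}^{\infty} f_{IX,IY}(x,y)\,dy=\frac{f_{IX}(x)}{2}+\frac{f_{X}(x)}{2}.
\end{equation}
Integrating this identity over $x$ and using linearity of the integral yields
\begin{equation}\nonumber
\int_{-\infty}^{\infty}\!\!\int_{-\infty}^{\infty} f_{IX,IY}(x,y)\,dy\,dx=\frac{1}{2}\int_{-\infty}^{\infty}f_{IX}(x)\,dx+\frac{1}{2}\int_{-\infty}^{\infty}f_{X}(x)\,dx.
\end{equation}

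Each of the two integrals on the right equals unity: the second because $f_X$ is a probability density, and the first by the earlier Proposition asserting that $f_{IX}$ (Def.~\ref{def:left}) is a valid probability density. Thus the double integral equals $\tfrac12+\tfrac12=1$, which establishes Eqn~\ref{eq:eqC}. The only delicate point in the argument is the interchange of the order of integration, but this is automatic here because $f_{IX,IY}\ge 0$, so Tonelli's theorem applies unconditionally and the finiteness of the marginal integrals follows a posteriori from the two unit integrals. I would therefore expect no real obstacle beyond this routine justification, since the substantive work—reducing the double integral to the marginal $\tfrac12\bigl(f_{IX}+f_X\bigr)$—was already performed in Lemma~\ref{marginal}.
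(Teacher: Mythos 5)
Your proof is correct and follows essentially the same route the paper intends: the corollary is stated there without explicit proof, but it is meant to follow immediately from Lemma~\ref{marginal} and the preceding corollary on the marginals, exactly as you argue (non-negativity from $f_{X,Y}\ge 0$ and $\log F_{X,Y}\le 0$, normalization by integrating the marginal $\tfrac12 f_{IX}+\tfrac12 f_{X}$ and using that each summand integrates to one). Your added Tonelli justification and the remark that the inequality is really $\ge 0$ rather than the strict $>0$ claimed in the statement are both sound refinements, not deviations.
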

Addressing joint-variables calls for the analysis of statistical independency~\cite{Walpole}. Now we have the fundamental additivity property of information densities:
\begin{proposition}
\textit{If $X$ and $Y$ are independent random variables with joint pdf $f_{X,Y}(x,y)$, then
their joint information-weighted pdf $f_{IX,IY}(x,y)$ is}:
\begin{equation}
f_{IX,IY}(x,y)=f_Y(y)f_{IX}(x)+f_X(x).f_{IY}(y).
\end{equation}
\end{proposition}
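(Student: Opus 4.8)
The plan is to exploit the product structure that independence forces on both the joint density and the joint cumulative distribution, and then to turn the logarithm of a product into a sum. First I would invoke independence to write $f_{X,Y}(x,y)=f_X(x)f_Y(y)$ and $F_{X,Y}(x,y)=F_X(x)F_Y(y)$, and substitute both into the defining relation~\eqref{eq:joint_pdf}. This is the only place where the hypothesis is actually used, so isolating it at the outset keeps the remainder of the argument purely algebraic.

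Next I would apply $\log\!\left(F_X(x)F_Y(y)\right)=\log F_X(x)+\log F_Y(y)$, which splits the single product into two terms, each carrying the prefactor $-\tfrac{1}{2}f_X(x)f_Y(y)$. Regrouping the factors, the first term becomes $\tfrac{1}{2}f_Y(y)\cdot\left[-f_X(x)\log F_X(x)\right]$ and the second $\tfrac{1}{2}f_X(x)\cdot\left[-f_Y(y)\log F_Y(y)\right]$. The concluding step is to recognize the bracketed quantities as the one-dimensional information-weighted densities of Definition~\ref{def:left}, namely $f_{IX}(x)=-f_X(x)\log F_X(x)$ and $f_{IY}(y)=-f_Y(y)\log F_Y(y)$, and to substitute them so as to read off the additive structure $\tfrac{1}{2}f_Y(y)f_{IX}(x)+\tfrac{1}{2}f_X(x)f_{IY}(y)$.

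Since no integration, limit exchange, or integrability hypothesis enters, there is no genuine analytic obstacle here — in sharp contrast with the normalization proofs earlier in the paper, which leaned on integration by parts and on $\lim_{y\to 0}y\log y=0$. The one point demanding care is the bookkeeping of the constant $\tfrac{1}{2}$ inherited from the joint definition: it must be distributed identically across both summands. I would finish by checking the resulting expression for consistency against the marginals $\tfrac{1}{2}f_{IX}(x)+\tfrac{1}{2}f_X(x)$ and $\tfrac{1}{2}f_{IY}(y)+\tfrac{1}{2}f_Y(y)$ established in Lemma~\ref{marginal}, which are recovered upon integrating out the other variable and thereby corroborate the additive form.
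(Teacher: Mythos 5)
Your algebra is correct and follows exactly the route the paper itself takes: use independence to factor both $f_{X,Y}$ and $F_{X,Y}$, split $\log\left(F_X(x)F_Y(y)\right)$ into a sum, and recognize the one-dimensional information-weighted densities in each term. The one substantive point is that your careful tracking of the constant leads you to $\tfrac{1}{2}f_Y(y)f_{IX}(x)+\tfrac{1}{2}f_X(x)f_{IY}(y)$, which is \emph{not} the formula the proposition asserts. That discrepancy is the paper's, not yours: the paper's own proof writes $f_{IX,IY}(x,y)=-f_X(x)f_Y(y)\cdot\left[\log F_X(x)+\log F_Y(y)\right]$, silently dropping the factor $\tfrac{1}{2}$ present in the definition of the joint information-weighted density, and the stated conclusion inherits that slip. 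Your halved version is the consistent one: integrating it in $y$ recovers the marginal $\tfrac{1}{2}f_{IX}(x)+\tfrac{1}{2}f_X(x)$ of Lemma~\ref{marginal} (exactly the consistency check you propose), and it integrates to $1$ over the plane, whereas the proposition's stated right-hand side integrates to $2$ and so cannot be a density. You should therefore either state and prove the corrected identity with the $\tfrac{1}{2}$ factors, or note explicitly that the proposition as printed contains a factor-of-two typo; as a blind proof of the literal statement, your argument (correctly) fails to reach it, and no argument could.
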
 
\begin{proof}
Given that $X$ and $Y$ are independent, both $f_{X,Y}$ and $F_{X,Y}$  are separable, and
\begin{equation}
f_{IX,IY}(x,y)=-f_X(x)f_Y(y).\left [ \log F_X(x)+\log F_Y(y) \right ],
\end{equation}
concluding the proof.
\end{proof}
The integration of the joint density in this case (assuming independence between the variable $X$ and $Y$) results in marginal densities exactly with the same expressions as in Lemma ~\ref{marginal}, 
$\frac{1}{2}f_{IA}(\alpha)+\frac{1}{2}f_{A}(\alpha)$, as expected.

\section{Symmetrization of the Balancing of Tails in the ``Weighted by Information'' Distribution}

Seeking an interpretation of this ``shaping''~in the original distribution in Def. \ref{def:left}, we see that it is weighted by the amount of information $-\log F_X (x)$, which is rather large in the tail to the left, while the weighting factor decreases as $x$ grows from minus infinity to infinity (a bias to the left). The easiest way to perceive this effect is by noting that uniform distribution is weighted on the left part of the distribution (see Fig. ~\ref{fig:fig1}). Thus, the exchanging $f_X(\cdot)$ emphasizes the left tail. A similar procedure could be used to ``enhance''~ the right tail. Here, let us denote by $\bar{F}_X(x) \defeq P\left ( X> x \right )$ and $F_X(x) \defeq P\left ( X\leq x \right )$. 
(N.B. through this paper, $\bar{F}$ denotes the complementary distribution or survivor function, $\bar{F}_X(x)=1-F_X(x)$, or the ``tail function'' $\bar{F}(x) \defeq F(x,\infty)~\forall x$ as in~\cite{foss}. Compare this with the Def. \ref{def:left}, which can now be interpreted as a left cumulative information distribution.
\begin{definition}\label{def:rigth}
\textit{(right tail information-weighted pdf) The right cumulative information distribution $f_{X_{right}}(x)$ is defined by}:
\begin{equation} \label{eq:righttail}
f_{X_{rigth}}(x) \defeq -f_X(x)\cdot \log (\bar{F}_X(x)). ~~~\Box
\end{equation}
\end{definition}	
This is the probability density weighted by the cumulative hazard $H(x) \defeq -\log  (1-F_X(x))$ of survival analysis~\cite{lee_wang}. It turns out that this definition also builds a probability density. There are also similar (unshown) results to those of Section 2, but this turn with a deviation to the right. 
It seems yet natural to introduce a symmetrization procedure so as to make corrections in both tails of the distribution. 
\begin{definition}\label{def:bilateral_tail}
\textit{(bilateral information) The two-sided information-weighted distribution $f_{X_{2tail}}(x)$ is defined by}:
\begin{equation} \label{eq:twotail}
f_{X_{2tail}}(x) \defeq -\frac{1}{2}f_X(x)\cdot \log \left ( F_X(x).\bar{F}_X(x) \right ) . ~~~\Box
\end{equation}
\end{definition}	
It is now easier to write $f_{X_{2tail}}(x)$ as a linear combination (symmetric) of information-weighted  density at the left and the right, i.e.
\begin{equation}\label{eq:2tail}
f_{X_{2tail}}(x)=\frac{1}{2} \left ( f_{X_{left}}(x)+f_{X_{rigth}}(x) \right ).
\end{equation}
It is straightforward demonstrate that this is also a undeniable probability density. Again, all integrals have been checked with the aid of Wolfram site~\cite{Wolfram}. 
In the case of continuous distributions with bilateral weighting, there are just two intersections between the distributions ($f_X$ and $f_{X2tail}$), namely:
\begin{equation}
x^*= F_X^{-1}\left [  \frac {1}{2} \left ( 1 \pm \frac {\sqrt{e^2-4}}{e} \right ) \right ].
\end{equation}
For instance, intersection of $U$ and $U_{2tail}$ densities are $x^* \approx 0.161$ and $x^* \approx 0.839$; for the $IExponential$ distribution, the two crossing points occur at $x^* \approx 0.176$ and $x^* \approx 1.824$ and so on.\\
In the case of the information-weighted uniform distribution, $IU(0,1)$, the (2-tail) probability distribution function CDF can be easily derived:
\begin{equation}\label{eq:CDF_U2tail}
F_{IU~2tail}(x)=\frac{1}{2}\left ((1-x)\cdot \log(1-x)-x\cdot \log(x) \right )+x,
\end{equation}
which is obtained by integration, and is plotted in Fig. ~\ref{fig:fig3}A. Here we find an unexpected connection with the wavelet theory: 
a CDF weighted uniform distribution information is identical to that of uniform distribution,  overlaid with a unicyclic wavelet (see~\cite{deO_Araujo}). Formally,
\begin{equation}
F_{IU~2tail}(x)=F_U(x)+\psi_U(x)=x+\psi_U(x),
\end{equation}
where $\psi_U(x)$ is the compactly supported wavelet defined within $[0,1]$ by:
\begin{equation} \label{eq:psiU}
\psi_U(x) \defeq -\frac{1}{2}x\cdot \log(x)+\frac{1}{2}(1-x)\cdot \log(1-x),
\end{equation}
whose sketch is shown in Fig. ~\ref{fig:fig_wavelet}A. Its maximum (minumum) value is given by 
$\frac{1}{2}cotgh^{-1}\frac{e}{\sqrt{e^2-4}}-\frac{\sqrt{e^2-4}}{e}= 0.0733805...$ (-0.0733805...),
which occours at $\frac{1}{2}-\frac{\sqrt{e^2-4}}{2e}=0.161378...$ (1-0.161378...). The decomposition is illustrated in the Fig. ~\ref{fig:fig_wavelet}B. \\
Let us now examine one use of this decomposition. How to find the \textit{percent point function} (PPF or quantile function) associated with the CDF $F_{IU~2tail}$? 
Calculate the inverse of the function described in Eqn~\ref{eq:CDF_U2tail}, at first glance, does not seem trivial. But it is clear that reversing sign in half-cycles of the wavelet provides the inverse function sought.
Thus, the PPF $G(\alpha)$ of the distribution $U_{2tail}$ is:
\begin{equation}
x=G(\alpha) = F_U(\alpha)-\psi_U(\alpha)=\alpha+\frac{1}{2}\alpha\cdot \log(\alpha)-\frac{1}{2}(1-\alpha)\cdot \log(1-\alpha).
\end{equation}
Clearly, $G \circ F_{IU~2tail}$ is the identity, where the symbol $\circ$ denotes ``composition of functions''. 
\begin{figure}
\centering
       \begin{subfigure}{0.4\textwidth}
       \includegraphics[height=4cm]{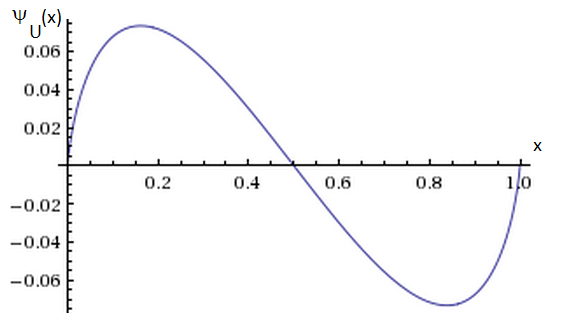}
       \captionof{figure}{Compactly supported wavelet $\psi_U(x)$ associated with the info-weighted uniform distribution. }
       \end{subfigure}%
       \hfill
       \begin{subfigure}{0.4\textwidth}
       \includegraphics[height=4cm]{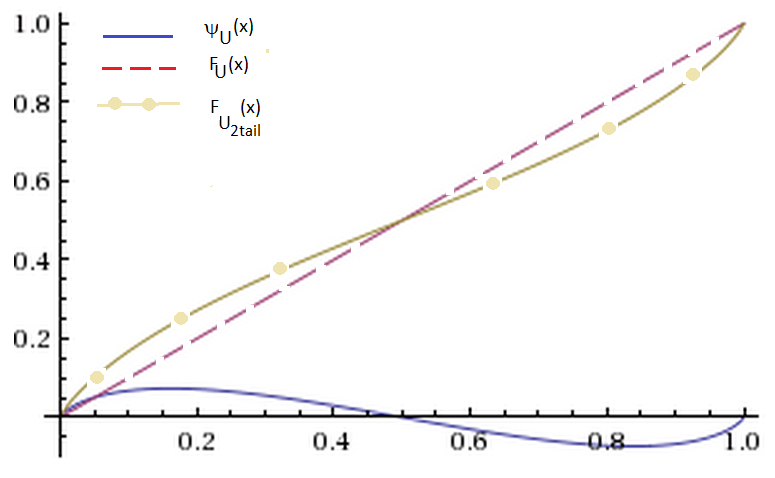}
       \captionof{figure}{Synthesis of the CDF of a variable $IX \sim U{2tail}(0,1)$ engendred by the uniform distribution (dashed line).}
       \end{subfigure}
\caption{Decomposition of the Bilateral-weighted Uniform Probability Distribution Function.}
\label{fig:fig_wavelet}
\end{figure}
Another case where the two-sided weigthed CDF has a simple close expression is the logistic, $ILogistic$ (also shown in Fig. ~\ref{fig:fig3}B):
\begin{equation}
F_{Ilogistic~2tail}(x)=\frac{1}{2}-\frac{1}{2}\frac{xe^x-\left ( e^x-1 \right )\log \left ( e^x+1 \right )+2}{e^x+1},
\end{equation}
\\
\begin{figure}
\centering
       \begin{subfigure}{0.4\textwidth}
       \includegraphics[height=4cm]{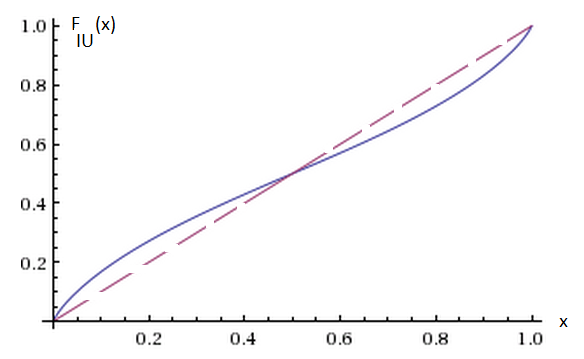}
       \captionof{figure}{Bilateral-weighted Probability Distribution Function CDF of a variable $IX \sim IU_{2tail}(0,1)$ engendred by the uniform distribution $U(0,1)$ (dashed line). }
       \end{subfigure}%
       \hfill
       \begin{subfigure}{0.4\textwidth}
       \includegraphics[height=4cm]{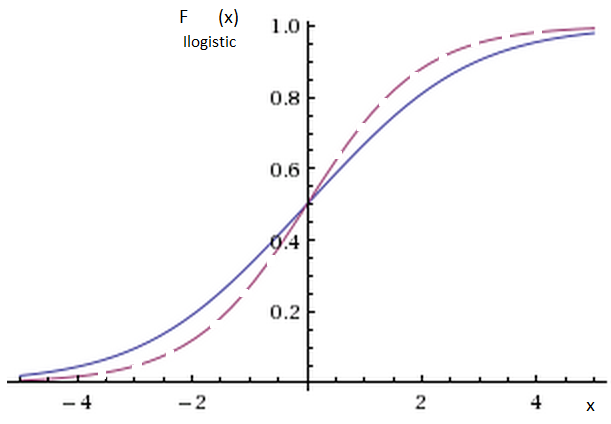}
       \captionof{figure}{Bilateral-weighted Probability Distribution Function CDF of a variable $IX \sim Ilogistic_{2tail}(0,1)$ engendred by the logistic distribution (dashed line).}
       \end{subfigure}
\caption{Bilateral heavy-tail cumulative probability function associated with uniform and logistic distributions. Dashed lines corresponds to original distributions.}
\label{fig:fig3}
\end{figure}
The corresponding two-sided weighted probability distribution are compiled in Table \ref{table:tab5}. Plots of the probability density of the new two-sided weighted distributions are shown in Fig. ~\ref{fig:fig4}. 
For unimodal distributions with asymmetry, the generation of distribution with weighting-information seems to give rise to a \textit{sag} around the median, with a different behavior for each side, as in Figure ~\ref{fig:fig4} (E, H, I and K). It is also worth to show that the mean of the two-tail weighted distribution remains unchanged in cases where the original distribution is symmetrical (see for example Figure ~\ref{fig:fig4} A, B, D, G). In these cases, 
the left and right-shifts cancels out each other, ($EX_{2tail}=E(X)-\Delta \mu +\Delta \mu=E(X)$ instead of Eqn~\ref{eq:mean_shift}).\\
Another interesting study concerns the evaluation of the degree of kurtosis associated with the distributions here introduced. Instead of using the \textit{excess kurtosis}, we chose to evaluate the percentile coefficient of kurtosis, $\kappa$ (\cite{abramowitz}), which is compared to the value 0.263 (normal curve):
\begin{equation}
\kappa \defeq \frac{1}{2} \frac {Q_3-Q_1}{P_{90}-P_{10}}.
\end{equation}
When evaluating the degree of kurtosis for distributions generated from the normal distribution, specifically, $IN (0,1)$ and $IN_{2tail}(0,1)$, we have:\\
\\
\centerline{$left$: $Q_1 \approx-1.493$, $Q_3\approx-0.299$, $P_{10}\approx-2.044$ and $P_{90}\approx0.221$.}\\
\centerline{$2tail$: $Q_1 \approx-0.927$, $Q_3\approx0.927$, $P_{10}\approx-1.649$ and $P_{90}\approx 1.649$.}\\
\\
Therefore, we obtain $\kappa\approx 0.2635$ and $\kappa \approx 0.2811$, respectively.
Despite having a left-weighted tail (and a slight asymmetry), it is seen in Figure ~\ref{fig:fig1}B that the distribution behavior remains practically mesokurtic 
(the fact is that extending on the left side is somewhat offset by tapering on the right side. This is best visualized by visual inspection of the ICauchy distribution.) 
However, observing Figure ~\ref{fig:fig4}B, it can be seen that the new bilateral distribution is platykurtic, with long tails.\\
\\
Aiming to determine closed expressions for the heavy-tailed probability distributions associated with a given CDF $F_X (x)$, 
the expressions defined in Eqn~\ref{eq:lefttail}, Eqn~\ref{eq:righttail} and Eqn~\ref{eq:twotail} were integrated, resulting in:
\begin{subequations} \label{eq:INFO_CDF}
\begin{equation}
F_{left}(x)=F_X(x)-F_X(x)\cdot \log F_X(x),
\end{equation}
\begin{equation}
F_{right}(x)=F_X(x)+\bar{F}_X(x)\cdot \log\bar{F}_X(x),
\end{equation}
\begin{equation}
F_{2tail}(x)=F_X(x)-\frac{1}{2}F_X(x)\cdot \log F_X(x)+\frac{1}{2}\bar{F}_X(x)\cdot \log\bar{F}_X(x).
\end{equation}
\end{subequations}
Here is a beautiful interpretation in the case of bilateral weighted distributions: 
Equation~\ref{eq:INFO_CDF}c which calculates the CDF of $X_{2tail}$ can be rewritten as:
\begin{equation}
F_{2tail}(x)=F_{IU~2tail}(F_X(x))=F_{IU~2tail} \circ F_{X}(x).
\end{equation}
Figure ~\ref{fig:INFO_CDF} was sketched so as to check the asymptotic behavior for these probability distribution (Eqn~\ref{eq:INFO_CDF}), considering the normal distribution and their respective info-weighted distributions.

\begin{table}[!h]
\centering
\caption{Bilateral information-weighted probability densities: 2-heavy-tail distributions derived from selected known probability distributions.}
\label{table:tab5}
\begin{tabular}{ c c } 
\hline 
distribution & $f_{X_{2tail}}(x)$ \\ \hline
\\
$U_{2tail}(0,1)$ & $\frac{-1}{2}\log \left (x.(1-x)\right )$\\
$N_{2tail}(0,1)$ & $-\frac{1}{2 \sqrt{2\pi }}e^{-x^2/2}\log \left ( \frac{1-erf^2(\frac{x}{\sqrt{2}})}{4}  \right )$\\
$E_{2tail}(1)$ & $-\frac{1}{2}e^{-x} \log \left ( e^{-x} - e^{-2x} \right )u(x)$ \\
$logistic_{2tail}$ & $-\frac{e^{-x} \left ( x-2\log (1+e^x) \right ) }{2 \left (1 -e^{-x} \right )^2}$\\
$Ray_{2tail}(1)$ & $\frac{-x}{2}e^{-x^2/2}\cdot \log \left ( e^{-x^2/2}-e^{-x^2} \right )u(x)$ \\  
$Par_{2tail}(\alpha,1)$ &$-\frac{\alpha \log  \left [ \left ( 1-x^{-\alpha} \right ). x^{-\alpha} \right ] }{2x^{\alpha+1}}$\\
$Cau_{2tail}(1)$ & $-\frac{1}{2 \pi(1+x^2)}\log  \left ( \left ( \frac{1}{2}+\frac{tg^{-1}(x)}{\pi} \right ).\left ( \frac{1}{2}-\frac{tg^{-1}(x)}{\pi} \right ) \right )$ \\
$Wei_{2tail}(1,k)$ & $\frac{-k}{2}x^{k-1}e^{-x^{k}}\log  \left ( \left ( 1-e^{-x^{k}} \right ).\left ( e^{-x^{k}} \right )\right )u(x)$\\
$Maxw_{2tail}(1)$& ${\frac{-x^2}{\sqrt{2\pi}}}e^{-x^{2}/2}\log  \left [ \left ( erf\left ( \frac{x}{\sqrt{2}}\right )-\sqrt{\frac{2}{\pi}} xe^{-x^{2}/2} \right ) . \left ( 1-erf\left ( \frac{x}{\sqrt{2}}\right )+\sqrt{\frac{2}{\pi}} xe^{-x^{2}/2} \right ) \right ]u(x)$ \\
$Kum_{2tail}(a,b)$ & $-\frac{ab}{2}.x^{a-1} \left ( 1-x^{a} \right )^{b-1}\cdot \log \left [ \left ( 1-\left ( 1-x^{a} \right )^{b} \right ).\left ( \left ( 1-x^{a} \right )^{b} \right ) \right ]$ \\
\\ \hline
\end{tabular}
\end{table}

\begin{figure}
       \centering
       \includegraphics[height=5.0cm]{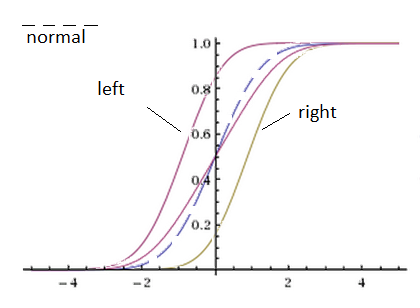}
       \captionof{figure}{CDF of information-weighted distributions conjugated with the normal $N(0,1)$ (dashed): \textit{left, right, and two-tail}. }.
\label{fig:INFO_CDF}
\end{figure}

By completeness, Table \ref{table:EV} shows information-weighted distributions conjugated with extreme value distributions~\cite{Werner}.
\begin{table}[!h]
\centering
\caption{Information-weighted Extreme-Value (EV) distributions: type I, II and III EV distributions.}
\label{table:EV}
\begin{tabular}{ c c c } 
\hline 
type & EV & $IEV_{2-tail}$\\ \hline
Gumbel  & $e^{-x-e^{-x}}$ & $-\frac{1}{2}e^{-x-e^{-x}}\cdot \log \left( e^{-e^{-x}}.\left[ 1-e^{-e^{-x}} \right] \right)$\\
Fr\'echet, $\alpha>0$ & $\alpha.x^{(\alpha+1)}.e^{-x^{-\alpha}}u(x)$ & $-\frac{\alpha}{2}.x^{-(\alpha+1)}.e^{-x^{-\alpha}}\cdot \log \left( e^{-x^{-\alpha}}.\left[ 1-e^{-x^{-\alpha}} \right] \right)u(x)$\\
Weibull, $k>0$ & $k.x^{k-1}.e^{-x^k}u(x)$ & $-\frac{1}{2}.k.x^{k-1}.e^{-x^k}\cdot \log \left( e^{-x^{-k}}.\left[ 1-e^{-x^{-k}} \right] \right)u(x)$
\\ \hline
\end{tabular}
\end{table}

One of the lines that deserves further attention is the application of these new distributions in Importance Sampling~\cite{Smith}. The proposal is to evaluate $E\left ( h(x) \right )$ by calculating the expected value ($E_I(\cdot)$) with respect to the tail-heavy associated distribution, i.e. via $E_{I} \left ( -h(x)/\log F_X(x)) \right )$. This should be still the subject of future research.

\begin{figure}
       \centering
       \begin{subfigure}{0.3\textwidth}
       \includegraphics[height=3.3cm]{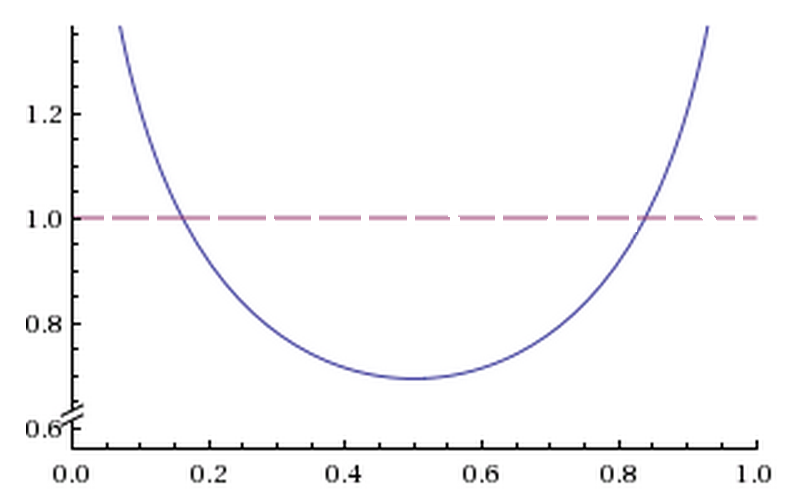}
       \captionof{figure}{$Uniform_{2tail}$}
       \end{subfigure}%
       \hfill
       \begin{subfigure}{0.25\textwidth}
       \includegraphics[height=3.3cm]{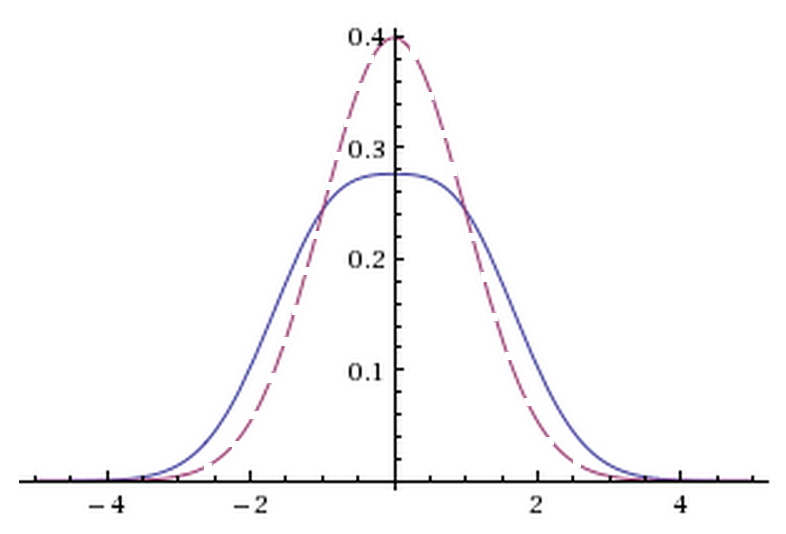}
       \captionof{figure}{$Normal_{2tail}$}
       \end{subfigure}

       \begin{subfigure}{0.20\textwidth}
       \includegraphics[height=3.3cm]{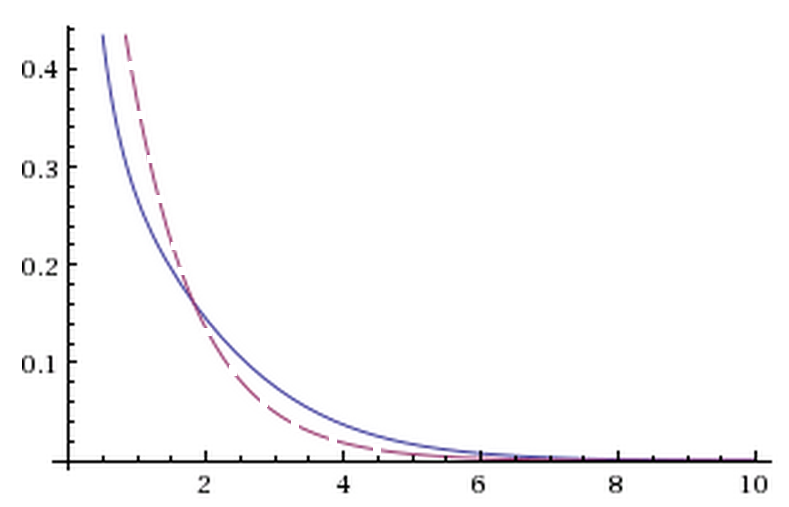}
       \captionof{figure}{$Exponential_{2tail}$}
       \end{subfigure}%
        \hfill
       \begin{subfigure}{0.2\textwidth}
       \includegraphics[height=3.3cm]{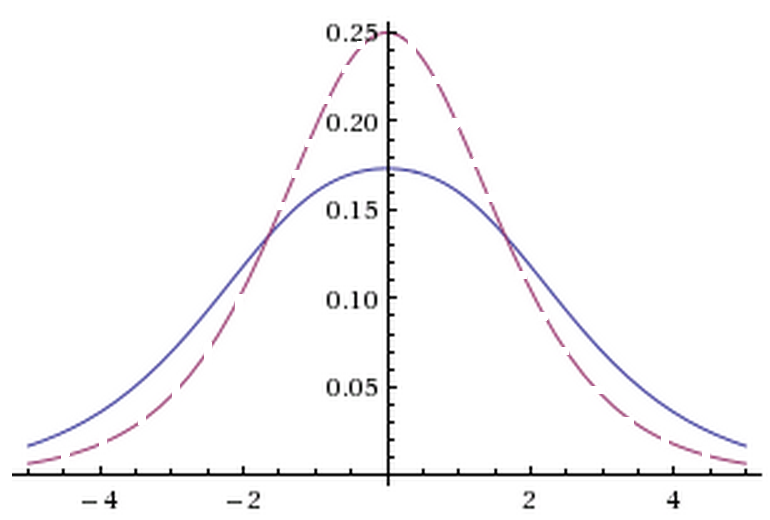}
       \captionof{figure}{$logistic_{2tail}$}
       \end{subfigure}

       \begin{subfigure}{0.2\textwidth}
       \includegraphics[height=3.3cm]{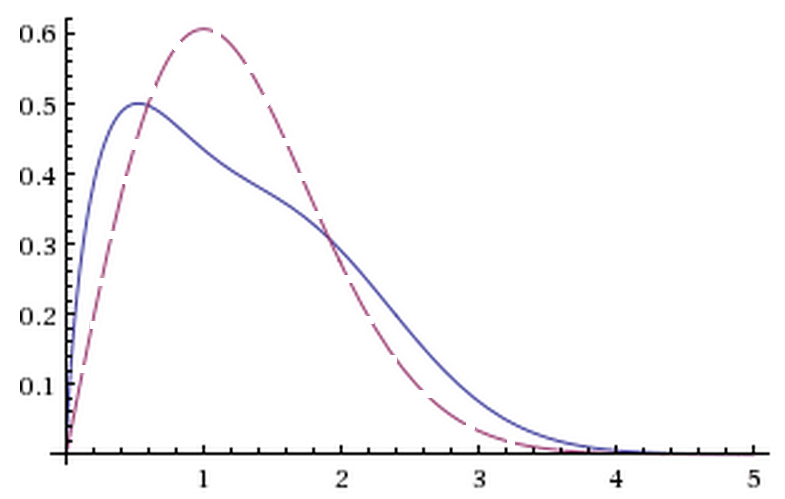}
       \captionof{figure}{$Rayleigh_{2tail}$}
       \end{subfigure}%
        \hfill
       \begin{subfigure}{0.20\textwidth}
       \includegraphics[height=3.3cm]{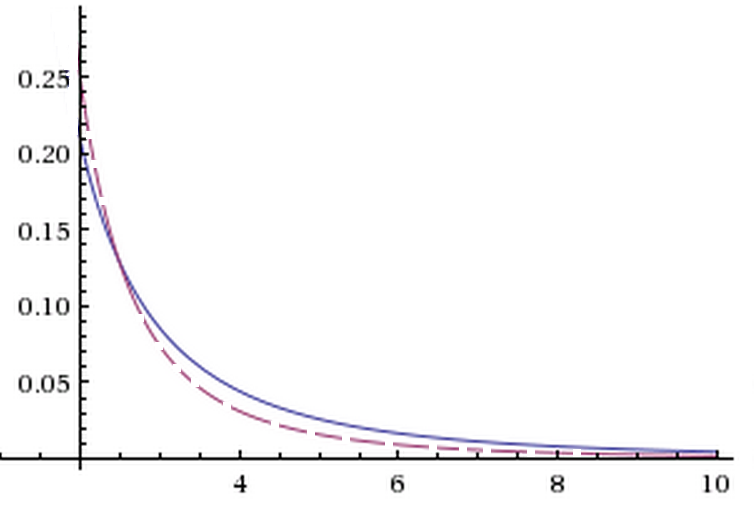}
       \captionof{figure}{$Pareto_{2tail}$}
       \end{subfigure}

       \begin{subfigure}{0.20\textwidth}
       \includegraphics[height=3.3cm]{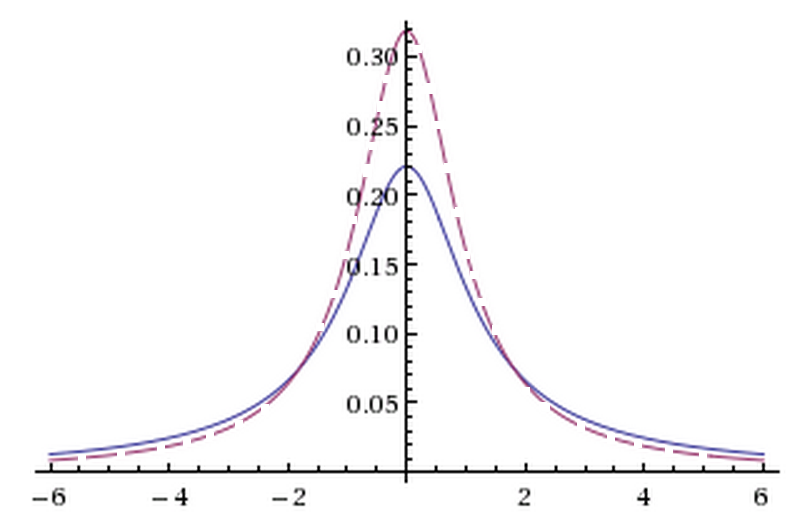}
       \captionof{figure}{$Cauchy_{2tail}$}
       \end{subfigure}%
        \hfill
       \begin{subfigure}{0.20\textwidth}
       \includegraphics[height=3.3cm]{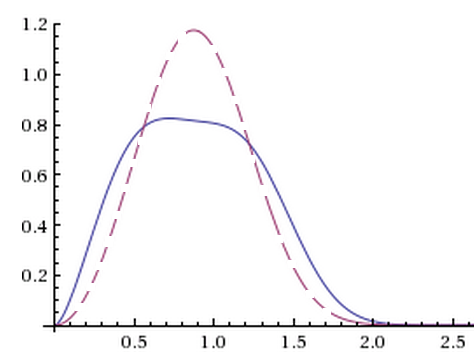}
       \captionof{figure}{$Weibull_{2tail}$}
       \end{subfigure}

       \begin{subfigure}{0.2\textwidth}
       \includegraphics[height=3.3cm]{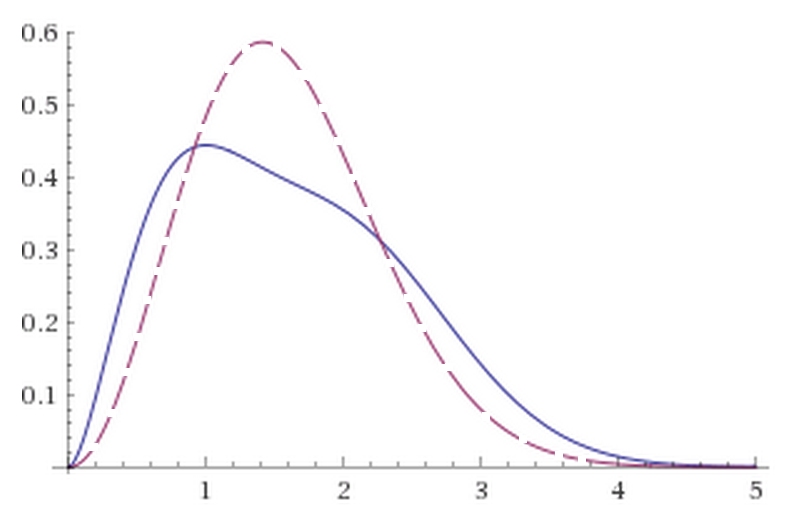}
       \captionof{figure}{$Maxwell_{2tail}$}
       \end{subfigure}%
        \hfill
       \begin{subfigure}{0.20\textwidth}
       \includegraphics[height=3.3cm]{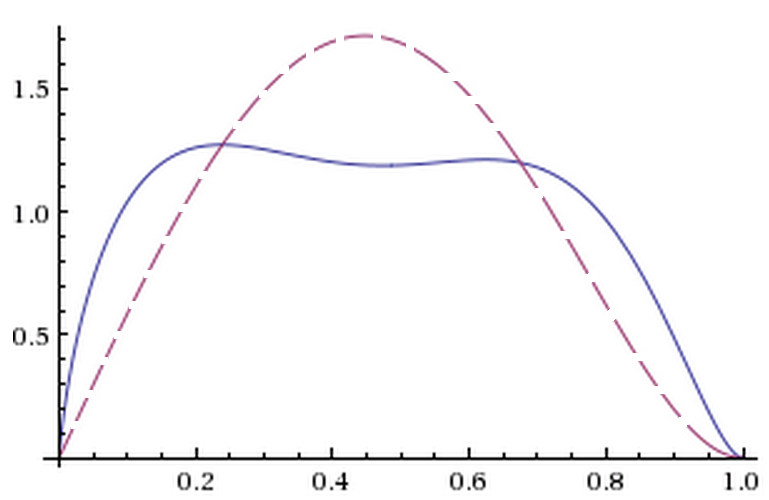}
       \captionof{figure}{$Kumaraswamy_{2tail}$}
       \end{subfigure}
\caption{Bilateral heavy-tail probability density function associated with standard distributions. Parameters: Uniform (0,1), Normal (0,1), Exponential (1), Rayleigh (1), Pareto (2,1),  Weibull (1,3), Maxwell-Boltzmann (1), Kumaraswamy (2,3). Dashed lines corresponds to original distributions.}
\label{fig:fig4}
\end{figure}

\section{Study on the Behavior of Distribution Tails}

This short section addresses the asymptotic behavior of the (unbounded supported) probability distributions here introduced. 
We have adopted the following definitions to investigate the distributions tail behavior (subexponential distributions):

\begin{definition}\label{def:heavy-tailed}
(\textit{heavy-tailed distribution}~\cite{Brown-Tukey},~\cite{foss})\\
i) A probability distribution is said to be right heavy-tailed if and only if
\begin{equation}
\lim_{x \to +\infty } e^{\lambda x}P\left ( X> x \right )=e^{\lambda x}\bar{F}_X(x)=+\infty ~~~(\forall \lambda >0).
\end{equation}
ii) A probability distribution is said to be left heavy-tailed if and only if
\begin{equation}
\lim_{x \to -\infty } e^{-\lambda x}P\left ( X\leq x \right )=e^{-\lambda x}F_X(x)=+\infty ~~~(\forall \lambda >0). ~~~\Box
\end{equation}
\end{definition}	
(indeed, the formal relationship could be $\int_{-\infty}^{\infty} e^{\lambda x}dF_X=+\infty.$)
The application of L'H\^{o}pital's rule~\cite{Protter} leads to a similar relationships with probability density, i.e.
\begin{equation}
\begin{split}
&\lim_{x \to +\infty } e^{\lambda x}.f_X(x)=+\infty ~~~(\forall \lambda >0)~and\\
&\lim_{x \to -\infty } e^{-\lambda x}.f_X(x)=+\infty ~~~(\forall \lambda >0).\\
\end{split}
\end{equation}
This is equivalent to the Lemma 2.7, Page 9, in~\cite{foss}, but following the \textit{lex parsimoni\ae} of Occam's Razor~\cite{forster}.
\begin{proposition}
The probability density functions $f_{X_{rigth}}(x),~ f_{X_{left}}(x),~and~f_{X_{2tail}}(x)$ describe heavy-tailed distributions.
\end{proposition}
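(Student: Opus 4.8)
The plan is to argue from the density-level reformulation of Definition~\ref{def:heavy-tailed} recorded just above the statement: a density $g$ is right heavy-tailed when $\lim_{x\to+\infty} e^{\lambda x} g(x) = +\infty$ for every $\lambda>0$, and left heavy-tailed when $\lim_{x\to-\infty} e^{-\lambda x} g(x) = +\infty$ for every $\lambda>0$. First I would discharge the two-sided density by reduction: by the decomposition Eqn~\ref{eq:2tail}, $f_{X_{2tail}}=\tfrac12\bigl(f_{X_{left}}+f_{X_{right}}\bigr)$, and since all three functions are nonnegative one has $f_{X_{2tail}}(x)\ge\tfrac12 f_{X_{right}}(x)$ and $f_{X_{2tail}}(x)\ge\tfrac12 f_{X_{left}}(x)$ pointwise. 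Consequently right heavy-tailedness of $f_{X_{2tail}}$ at $+\infty$ follows from that of $f_{X_{right}}$, and left heavy-tailedness at $-\infty$ from that of $f_{X_{left}}$. This leaves only the two one-sided statements to prove.

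For $f_{X_{right}}$ I would write the test quantity in factored form, $e^{\lambda x} f_{X_{right}}(x) = \bigl(e^{\lambda x} f_X(x)\bigr)\bigl(-\log\bar F_X(x)\bigr)$, and observe that the second factor diverges to $+\infty$ because $\bar F_X(x)\to 0^{+}$ as $x\to+\infty$; the treatment of $f_{X_{left}}$ at $-\infty$ is symmetric, using $-\log F_X(x)\to+\infty$. Equivalently, and more cleanly at the distribution level, I would read off from Eqn~\ref{eq:INFO_CDF} (by complementation) the forms $\bar F_{right}(x)=\bar F_X(x)\left(1-\log\bar F_X(x)\right)$ and $F_{left}(x)=F_X(x)\left(1-\log F_X(x)\right)$, and test $e^{\lambda x}\bar F_{right}(x)$ and $e^{-\lambda x} F_{left}(x)$ directly against Definition~\ref{def:heavy-tailed}. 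In both renderings the genuinely new ingredient is the diverging factor $1-\log(\cdot)$, which encodes the extra cumulative-information weight responsible for the heavier tail.

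The hard part will be the interaction between this diverging logarithmic factor and the decay rate of the base tail. When the original law is already heavy-tailed, $e^{\lambda x} f_X(x)$ is itself unbounded and the conclusion is immediate. The delicate regime is that of a rapidly decaying base tail, where $e^{\lambda x} f_X(x)\to 0$ and the entire weight of the argument rests on the logarithmic boost $-\log\bar F_X(x)$; there one must quantify the rate at which $\bar F_X(x)\to 0$ and compare it against $e^{\lambda x}$ for each fixed $\lambda>0$. I would isolate this comparison as a separate lemma, separating the slowly-varying and rapidly-varying cases and invoking the L'H\^{o}pital-type passage between the distribution-level and density-level criteria already quoted in the text. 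This is the step that must be carried out with care, and it is the crux on which the proposition stands.
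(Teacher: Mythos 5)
Your reduction of the two-sided case is sound and coincides with the paper's: by Eqn~\ref{eq:2tail} and nonnegativity, $f_{X_{2tail}}\ge\tfrac12 f_{X_{right}}$ and $f_{X_{2tail}}\ge\tfrac12 f_{X_{left}}$, so the two-sided claim follows from the one-sided ones. Your closed forms $\bar F_{right}(x)=\bar F_X(x)\left(1-\log\bar F_X(x)\right)$ and $F_{left}(x)=F_X(x)\left(1-\log F_X(x)\right)$ are also correct, and are in fact sharper than the paper's bound $e^{\lambda x}P(X_{right}>x)\ge e^{\lambda x}\bar F_X(x)\left[-\log\left(1-F_X(x)\right)\right]$, since they give the weighted tail exactly rather than from below. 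But your argument stops precisely where it needed to begin: the ``separate lemma'' comparing the logarithmic boost $-\log\bar F_X(x)$ against $e^{\lambda x}$ is the entire content of the proposition in the only nontrivial regime (a light-tailed base), and you do not prove it. That is a genuine gap, not a matter of polish.

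Moreover, the lemma you defer to is false, so the step you flag as ``the crux'' cannot be carried out in the stated generality. For $u\in(0,1]$ one has $u(1-\log u)\le\frac{2}{\sqrt e}\sqrt{u}$, hence $\bar F_{right}(x)\le\frac{2}{\sqrt e}\sqrt{\bar F_X(x)}$: whenever the base tail admits a bound $\bar F_X(x)\le Ce^{-2\mu x}$ for some $\mu>0$, the weighted tail satisfies $e^{\lambda x}\bar F_{right}(x)\to 0$ for every $0<\lambda<\mu$, violating Definition~\ref{def:heavy-tailed}. Concretely, for $X\sim E(1)$ one finds $f_{X_{right}}(x)=xe^{-x}$ (a Gamma density) and $e^{\lambda x}\bar F_{right}(x)=(1+x)e^{(\lambda-1)x}\to 0$ for $\lambda<1$; likewise $F_{left}$ for the standard normal behaves like $\tfrac{|x|}{2\sqrt{2\pi}}e^{-x^{2}/2}$ as $x\to-\infty$ and is annihilated by $e^{-\lambda x}$. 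The paper's own proof has the same hole --- it asserts that the lower bound $e^{\lambda x}\bar F_X(x)\left[-\log\left(1-F_X(x)\right)\right]$ diverges ``even though $F$ is light-tailed,'' which the exponential example refutes --- so you should not expect to close the gap by following the paper. The information weighting multiplies the tail by a factor that is only logarithmic in the tail itself; it thickens tails, but it yields a heavy-tailed law in the sense of Definition~\ref{def:heavy-tailed} only when the base distribution is already heavy-tailed, which is exactly the case you correctly note is immediate.
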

\begin{proof}
Let us start examining $e^{\lambda x}.P(X_{rigth}>x)$ for each arbitrary $\lambda >0$. Thus,
\begin{equation}
e^{\lambda x} \int_{x}^{+\infty } f_{X_{right}}(\zeta)d\zeta= -e^{\lambda x} \int_{x}^{+\infty }f_X(\zeta)\log \left (1-F_X(\zeta)  \right )d\zeta.
\end{equation}
The following upper bound is promptly derived:
\begin{equation}
e^{\lambda x}.P(X_{rigth}>x) \geqslant -e^{\lambda x} \log \left (1-F_X(x)  \right ) \int_{x}^{+\infty} f_X(\zeta) d\zeta,
\end{equation}
that is
\begin{equation}
e^{\lambda x}.P(X_{rigth}>x) \geqslant e^{\lambda x}\bar{F}(x). \left [ -\log  \left (1-F_X(x) \right ) \right ]
\end{equation}
so that taking the limit $x \to +\infty$, the right-side of the previous equation diverges, even though $F$ is light-tailed.
Considering now $e^{-\lambda x}.P(X_{left}<x)$ for each arbitrary $\lambda >0$, we have
\begin{equation}
e^{-\lambda x} \int_{-\infty}^{x} f_{X_{left}}(\zeta)d\zeta= -e^{\lambda x} \int_{-\infty}^{x} f_X(\zeta)\log \left (F_X(\zeta)  \right )d\zeta
\end{equation}
and the upper bound:
\begin{equation}
e^{-\lambda x}.P(X_{left}<x) \geqslant -e^{-\lambda x}F_X(x) . \log \left [ F_X(x) \right ]
\end{equation}
holds. Therefore, assuming that $F_X(x)=\mathcal{O}(e^{-\kappa |x|})$ for some $\kappa>0$ and taking the limit $x \to -\infty$, it follows that:
\begin{equation}
\lim_{x \to -\infty } e^{-\lambda x}.P(X_{left}<x) =+\infty.
\end{equation}
The proof for $f_{X_{2tail}}(x)$ follows from applying Eqn~\ref{eq:2tail} and previous results derived for left and right tails.
\end{proof}

An interesting class of heavy-tailed distributions is that with regular variation in the tails (\cite{Goldie},~\cite{Werner}).
\begin{definition}\label{def:regularly}
\textit{A distribution with CDF is said to be regularly varying with index $\alpha>0$ iff}
\begin{subequations}
\begin{equation}
\lim_{x \to +\infty } \frac{\bar{F}_X(tx)}{\bar{F}_X(x)}=t^{-\alpha},~~~\forall t>0~(right).
\end{equation}
\begin{equation}
\lim_{x \to -\infty } \frac{F_X(tx)}{F_X(x)}=t^{-\alpha},~~~\forall t>0~(left). ~~~\Box
\end{equation}
\end{subequations}
\end{definition}
Indeed, L'H\^{o}pital's rule can be applied to show that
\begin{equation}
\lim_{x \to +\infty } \frac{f_X(tx)}{f_X(x)}=t^{-(\alpha+1)}.
\end{equation}
The parameter $\alpha$ is called tail index or extreme-value index. Let us now investigate the effect of the info-weighting on the tail index of a regular variation distribution. The following proposition can now be established:
\begin{proposition}
Both conjugated distributions, $X$ and $IX$, have the same tail index $\alpha$.
\end{proposition}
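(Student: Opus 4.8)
The plan is to work with the \emph{left} tail, since the weighting factor $-\log F_X(x)$ blows up precisely as $x\to-\infty$; accordingly I would verify the left-tail regular-variation condition of Definition~\ref{def:regularly}(b) for $IX$. The starting point is the closed form of the conjugated CDF, Eqn~\ref{eq:CDF_IX}, namely $F_{IX}(x)=F_X(x)\,[\,1-\log F_X(x)\,]$, which lets me factor the defining ratio as
\begin{equation}
\frac{F_{IX}(tx)}{F_{IX}(x)}=\frac{F_X(tx)}{F_X(x)}\cdot\frac{1-\log F_X(tx)}{1-\log F_X(x)},\qquad t>0.
\end{equation}
For fixed $t>0$ and $x\to-\infty$ one has $tx\to-\infty$ as well, so both arguments genuinely probe the left tail.

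The first factor converges to $t^{-\alpha}$ by the hypothesis that $X$ is regularly varying with index $\alpha$, so the crux is the logarithmic factor. First I would take logarithms of the regular-variation hypothesis: since $F_X(tx)/F_X(x)\to t^{-\alpha}$, it follows that
\begin{equation}
\log F_X(tx)-\log F_X(x)\longrightarrow-\alpha\log t,
\end{equation}
a \emph{finite} limit. On the other hand, because $F_X(x)\to 0$ as $x\to-\infty$, the denominator satisfies $1-\log F_X(x)\to+\infty$. Rewriting the logarithmic factor as $1-\dfrac{\log F_X(tx)-\log F_X(x)}{1-\log F_X(x)}$ then exhibits a convergent numerator over a denominator diverging to $+\infty$, so that quotient tends to $0$ and the factor itself tends to $1$.

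Combining the two limits gives $\lim_{x\to-\infty}F_{IX}(tx)/F_{IX}(x)=t^{-\alpha}$ for every $t>0$, which is exactly Definition~\ref{def:regularly}(b) applied to $IX$; hence $X$ and $IX$ share the tail index $\alpha$. The main obstacle I anticipate is precisely the handling of the logarithmic factor: a naive attempt through the Karamata representation $F_X(x)=|x|^{-\alpha}L(|x|)$ forces one to argue that $\log L(|x|)=o(\log|x|)$ for slowly varying $L$, which is correct but heavier than necessary. The cleaner route above sidesteps slowly varying functions entirely, using only that the \emph{difference} of the logarithms converges while $\log F_X(x)$ itself diverges. I would also remark that the analogous computation on the right tail does \emph{not} preserve the index---from Eqn~\ref{eq:CDF_IX} one finds $\bar{F}_{IX}(x)\sim\tfrac12\bar{F}_X(x)^2$, which is regularly varying with index $2\alpha$---so the statement is genuinely a left-tail phenomenon, consistent with the left bias of the operator $\mathcal{I}\{\cdot\}$.
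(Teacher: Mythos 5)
Your proof is correct, but it travels a genuinely different route from the paper's. The paper works at the level of densities: it invokes L'H\^{o}pital's rule to replace $\bar{F}_{IX}(tx)/\bar{F}_{IX}(x)$ by $t f_{IX}(tx)/f_{IX}(x)$, substitutes the weighted-density formula, and then applies L'H\^{o}pital a second time to show $\log\bar{F}_X(tx)/\log\bar{F}_X(x)\to 1$; note also that the paper actually carries this out on the \emph{right} tail using the right-weighted density $f_{X_{right}}=-f_X\log\bar{F}_X$, whose survival function $\bar{F}_X(x)[1-\log\bar{F}_X(x)]$ is the mirror image of your $F_{IX}(x)=F_X(x)[1-\log F_X(x)]$. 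Your argument stays entirely at the CDF level: factor the ratio, use the hypothesis for the first factor, and kill the logarithmic factor by the ``convergent difference over divergent denominator'' trick. This buys you something real: the paper's first L'H\^{o}pital step runs in the invalid direction (regular variation of $\bar{F}_X$ does not by itself give $f_X(tx)/f_X(x)\to t^{-(\alpha+1)}$; that requires the monotone density theorem and an eventual-monotonicity hypothesis), whereas your computation needs no assumption on $f_X$ at all beyond the regular variation of the CDF. Your closing remark that $\bar{F}_{IX}(x)\sim\tfrac12\bar{F}_X(x)^2$, hence index $2\alpha$ on the right tail of the left-weighted distribution, is correct and is not in the paper; it usefully pins down that the proposition must be read tail-by-tail, pairing the left weighting with the left tail and the right weighting with the right tail.
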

\begin{proof}
Suppose that $X$ has a regularly varying distribution with index $\alpha>0$, i.e. Def. \ref{def:regularly}a holds.
In order to evaluate whether or not $IX$ is also a regular variation distribution, we should compute:
\begin{equation}
\lim_{x \to +\infty } \frac{\bar{F}_{IX}(tx)}{\bar{F}_{IX}(x)}.
\end{equation}
Now, this is the same as evaluating
\begin{equation}
\lim_{x \to +\infty } \frac{t.f_{IX}(tx)}{f_{IX}(x)}.
\end{equation}
Applying Def. \ref{def:rigth} into the previous equation, we have
\begin{equation}
\lim_{x \to +\infty } \frac{\bar{F}_{IX}(tx)}{\bar{F}_{IX}(x)}=
t^{-\alpha}.\lim_{x \to +\infty } \frac{\log ~\bar{F}_{X}(tx)}{\log ~\bar{F}_{X}(x)}.
\end{equation}
The right-side limit can be evaluated using L'H\^{o}pital's rule again, giving
\begin{equation}
\lim_{x \to +\infty } \frac{\bar{F}_X(x)}{\bar{F}_X(tx)}. \frac{tf_X(tx)}{f_X(x)}=t^{\alpha}.\lim_{x \to +\infty }\frac{tf_X(tx)}{f_X(x)}=1.
\end{equation}
Finally,
\begin{equation}
\lim_{x \to +\infty } \frac{\bar{F}_{IX}(tx)}{\bar{F}_{IX}(x)}=t^{-\alpha}.
\end{equation}
\end{proof}

The concept of heavy-tail is primarily linked to the unbounded support distributions since it encompasses a $\lim_{x \to \pm \infty}$. 
But the information-weighting ``thickens'' the tail of the distributions, even those double bounded support. 
This can be understood by observing the behavior of $U$ and $IU$ distributions in Figure ~\ref{fig:fig4}A.\\
\\
The value of the survival function evaluated at the cross point between the two related distributions, $\bar{F}_X(x^{*})$, can be used to measure the the tail heaviness (Table \ref{table:arc_length}). 
Another possible way to assess its effects on the distribution edges is to examine the arc lenght of the distribution curve. For an arbitrary distribution $F_X$ with support $[a,b]$, the arc length is given by:
\begin{equation}
arc~lenght \left ( F_X \right ) = \int_{a}^{b} \sqrt{1+ f_X^2(x) } \operatorname{d}\!x.
\end{equation}
In order to investigate the tail, the arc length can be restricted to the region beyond a percentil (say 90\%), i.e.,
\begin{equation}
\int_{F_X^{-1}(0.9)}^{b} \sqrt{1+f_X^2(x) } \operatorname{d}\!x,~and/or~\int_{a}^{F_X^{-1}(0.1)} \sqrt{1+f_X^2(x) } \operatorname{d}\!x.
\end{equation}
The next table shows the calculated arc length to the compact support distributions studied here.

\begin{table}[!h]
\centering
\caption{Arc length for compact supported distributions (approximated values). Value of the 90\% percentile (PPF) are $x_U=0.90000~~x_{IU}=0.95035~~x_{Kum}=0.7321~~x_{IKum}=0.7953$. 
Area under the tail is $\bar{F}_{X} (x^{*})$, values of $x^*$ (0.83862 uniform, 0.67495 Kumaraswamy) are the crossing point between a distribution and its two-tail info-conjugated.} \label{table:arc_length}
\begin{tabular}{ c c c c} 
\hline \\
distribution & arc length & $>90\%$-tail arc length & $ \bar{F}_{X} (x^{*})$\\ \hline
$X\sim~U(0,1)$ & $1.41421$ & $0.141421$ & $0.161378$\\
$X\sim~U_{2tail}(0,1)$ & $1.43633$ & $0.11206$ & $0.234758$\\ \hdashline
$X\sim~Kum(2,3)$ & $1.48334$ & $0.295092$ & $0.161382$\\
$X\sim~Kum_{2tail}(2,3)$ & $1.44321$ & $0.235125$ & $0.234763$
\\ \hline
\end{tabular}
\end{table}
One sees that the information-weighted distributions have a heavier tail even for double-bounded distributions (as viewed in the second and third columns of the Table \ref{table:arc_length}). The increase in the area under the tail after the crossover of density curves was about 45\% for both uniform and Kumaraswamy distributions. The decrease in the arc length beyond 90\% percentile was circa 20\% in both cases.

\section{Concluding Remarks}

This paper presented new probability density functions associated with standard probability densities, which were called ``information-weighted density.'' ~The concept has also been extended to joint probability distributions and the consequences of the statistical independence between variables was investigated. A further inquiry should be carried out to compute moments that exist for variables with distributions $UI, \sim U_{2tail}, \sim IN, \sim N_{2tail}, ... \sim IKum, \sim Kum_{2tail}$ as well as other quantities such as their entropy (Shannon, R\'enyi etc.). This can be an innovative approach even to treating phenomena modeled by distributions in which the tails are not ``heavy'', but where they control the process performance (e.g. hypothesis tests, Monte Carlo simulation). The relationship between wavelets and CDFs derived from symmetric weighted distributions is another issue that deserves further attention. The new distributions of heavy tail with bilateral weighing are the focus of this work. To summarize, starting from an arbitrary probability distribution with CDF $F_X (x)$, a new distribution associated can be build, whose CDF is expressed by $F_X (x) + \psi_U(F_X (x))$ (see Eq~~ \ref{eq:INFO_CDF}c and Eq~~ \ref{eq:psiU}).\\
The designing of new densities \textit{bilaterally weighted by information} in both tails opens interesting perspectives for delving the examination of scenarios in which this approach can be used. Just to name a few: finance~\cite{kluppelberg}, insurance~\cite{mikosch}, computer systems~\cite{harchol}, SAR images~\cite{achim}, World Wide Web traffic~\cite{crovella}, geophysics~\cite{kohlbecker}, electricity prices~\cite{weron}. There seems to be a multitude of processes that fit into procedures of relevant tails~\cite{clauset}. Even though this article have little mathematical depth, a host of heavy-tailed (subexponential distributions) probability distributions can be generated from this approach, thereby providing more degrees of freedom for distribution choices for modeling a random phenomena of interest. Rather than a formal, elegant and rigorous presentation, the authors opted for an approach in Euler style, making it clear (no sweeping under the rug) the steps that led to the proposal.\\ 

\section*{Acknowledgements}
The first author thanks D.R. de Oliveira with whom he first shared an early version of this work.

\bibliographystyle{alea3}
\bibliography{bibi}

\end{document}